\documentclass[a4paper,11pt,fleqn]{amsart}

\usepackage[numbers,sort&compress]{natbib}
\usepackage[pctex32]{graphics}
\usepackage[latin1]{inputenc}
\usepackage{epsfig}
\usepackage[english]{babel}
\usepackage{graphicx,colortbl}
\usepackage{graphics}
\usepackage[dvips]{geometry}
\setlength{\textwidth}{6.1in}
\setlength{\evensidemargin}{-0.010in}
\usepackage{psfrag}
\usepackage{graphicx}
\usepackage[active]{srcltx}
\usepackage{amscd}
\usepackage{amsmath,amstext, amsthm}
\usepackage{amsfonts}
\usepackage{amssymb}
\usepackage{epsfig} 

\usepackage{graphicx,color}
\newtheorem{theorem}{Theorem}[section]

\theoremstyle{definition}

\numberwithin{equation}{section}
\newcommand{\be}{\begin{equation}}
\newcommand{\ee}{\end{equation}}
\newcommand{\C}{\mathbb{C}}

\newcommand{\Q}{\mathbb{Q}}

\newcommand{\kxn}{k[x_1,\dots,x_n]}

\newcommand{\vv}{{\bf V}}

\newcommand{\la}{\langle}

\newcommand{\ra}{\rangle}

\begin{document}

\title{{Isochronicity and linearizability  \\
of a planar cubic system
}}
\author[W. FERNANDES, V.G. ROMANOVSKI, M.S. SULTANOVA, Y. TANG]
{Wilker  Fernandes$^{1}$, Valery G. Romanovski$^{2,3}$, Marzhan  Sultanova$^{4}$, Yilei  Tang$^{2,5}$}

\address{$^1$
Instituto de Ci\^encias Matem\'aticas e de Computa\c{c}\~ao - USP, Avenida Trabalhador S\~ao-carlense, 400, 13566-590, S\~ao Carlos, Brazil}
\email{wilker.thiago@usp.br (W. Fernandes)}

\address{$^2$ Center for Applied Mathematics and Theoretical Physics, University of Maribor, Krekova 2, Maribor,  SI-2000 Maribor, Slovenia}
\email{Valerij.Romanovskij@um.si (V.G. Romanovski)}

\address{$^3$ Faculty of Natural Science and Mathematics, University of Maribor, Koro\v ska c.160, Maribor, SI-2000 Maribor, Slovenia}

\address{$^4$ Faculty of Mechanics and Mathematics, al Farabi Kazakh National University, 71 al-Farabi Ave., Almaty, 050040, Kazakhstan}
\email{marzhan.ss@mail.ru (M. Sultanova)}

\address{$^5$
School of Mathematical Sciences, Shanghai Jiao Tong University, Shanghai, 200240, P.R. China}
\email{Corresponding author. mathtyl@sjtu.edu.cn (Y. Tang)}

\date{}

\begin{abstract}
In this paper we investigate the problem of linearizability for a family of cubic complex planar systems of ordinary differential equations.
 We give a  classification of linearizable systems in the family obtaining  conditions
for linearizability in terms of parameters.
We also discuss  coexistence of isochronous centers in the systems.
\end{abstract}

\keywords{Linearizability, isochronicity, Darboux linearization,  coexistence of centers, cubic differential systems}

\maketitle

\section{Introduction}

For planar real analytic differential systems of the form
\begin{equation} \label{sys introd 1}
\dot{x} = -y + P(x,y),
\hspace{0.5cm}
\dot{y} = x + Q(x,y),
\end{equation}
where $P$ and $Q$ are polynomials without constant and linear terms, it is well known that the origin can be either a center or a focus.
In the first case all solutions in a neighbourhood of the origin are periodic  and their trajectories are closed curves.
If the origin is a center, there arises the problem to determine whether all periodic solutions in a neighbourhood of the origin have the same period. This problem is known as the isochronicity problem.

The studies of isochronicity of polynomial differential systems go back to Lloud \cite{Loud}, who found the necessary and sufficient conditions for isochronicity of system \eqref{sys introd 1} when $P$ and $Q$ are homogeneous polynomials of degree two.
Latter on, Pleshkan \cite{P} solved the isochroniciy problem in the case when $P$ and $Q$ are homogeneous polynomials of degree three (see also \cite{Li Jibin}).
In the case when $P$ and $Q$ are homogeneous polynomials of degree five the problem was solved in \cite{R-C-H},
however, the case of homogeneous polynomials of degree four
is still unsolved.
A number of works is devoted to the investigation of some other particular families
(see, e.g \cite{C-G-G-1, C-G-G-2, C-G-G-3, C-S-1,Chen2011, C-G-M-M, H-R-1, Llib2009, Llo1997, M-R-T, R-S, Wu2010}
and references given there).

The following family of planar cubic systems
\begin{equation} \label{system 1}
\begin{aligned}
\dot{x} &= - y + p_2(x,y) +   x r_2(x,y) = P(x,y),
\\
\dot{y} &= x + q_2(x,y)  + y r_2(x,y) = Q(x,y),
  \end{aligned}
\end{equation}
where
\begin{equation*}
\begin{aligned}
p_2 = & a_{20} x^2 + a_{11} x y + a_{02} y^2,
\\
q_2 = & b_{20} x^2 + b_{11} x y + b_{02} y^2,
\\
r_2 = &  r_{20} x^2 + r_{11} x y + r_{02} y^2,
\end{aligned}
\end{equation*}
 has been studied in \cite{C-G, C-G-G, H-R-Z,Llo1997} for the case when all parameters are real.

In \cite{C-G} and \cite{C-G-G} the authors have shown that real  system \eqref{system 1} has a center and an isochronous center, respectively, if and only if
in polar coordinates after some transformations it can be written in one of four and five forms, respectively.
However from their results it is difficult to determine the conditions on parameters of polynomials $p_2, q_2, r_2$ for existence of centers and isochronous centers.
Conditions on parameters of $p_2, q_2, r_2$ for the existence of a center were obtained in \cite{Llo1997}
and latter on using another approach in  \cite{H-R-Z}.

In the work  \cite{Llo1997} published in 1997 the authors
obtained the necessary and sufficient conditions for
existence of isochronous center of system \eqref{system 1}
represented by four series of condition on coefficients
of the system, however in the more recent paper
\cite{C-G-G} published in 1999 the authors gave five conditions for
existence of isochronous center of system \eqref{system 1}.
One of aims  of this paper is to  clarify the conditions
for isochronicity of system \eqref{system 1}. For this purpose
we use the approach different from the ones of \cite{Llo1997}
and \cite{C-G-G}, namely we consider system \eqref{system 1}
as system with complex coefficients and find conditions
for linearization of the system.  We obtain five  series
of conditions for linearizability of \eqref{system 1}
and show that all linearizable systems are Darboux linearizable.

The paper is organized as follows.
In Section \ref{Sec:Prel} we recall some definitions and describe briefly a procedure to study the isochronicity
and linearizability  of  polynomial systems.
Applying this procedure, in Section \ref{Sec:Results} we present our main result, Theorem \ref{Theorem 1},
which gives  conditions for  linearizability of  system \eqref{system 1}.
In Section \ref{Sec:Relation} we present the relation between the results obtained in Theorem \ref{Theorem 1} (and in \cite{Llo1997}) and the results of \cite{C-G-G}.
Finally, in the last section we discuss the
coexistence of isochronous centers in system \eqref{system 1}.

\bigskip

\section{Linearizability quantities and Darboux linearization} \label{Sec:Prel}

In this section we remind some statements related to
  isochronicity and linearizability of
   polynomial differential systems and describe an
   approach to compute the  linearizability quantities
   for the system
   \begin{equation}\label{System general real}
\dot{x} = -y + \sum_{p + q \geq 2}^n a_{p,q} x^{p} y^{q} = P(x,y),
\hspace{0.3cm}
\dot{y} = x + \sum_{p + q \geq 2}^n b_{p,q} x^{p} y^{q} = Q(x,y),
\end{equation}
where $a_{p,q}, b_{p,q} $ are real or complex parameters.

If the equilibrium point at the origin of real  system \eqref{System general real} is known to be a center it is said that this center
is \textit{isochronous}
if all periodic solutions of \eqref{System general real} in a
neighbourhood of the origin have the same period.
System \eqref{System general real} is said to be \textit{linearizable}
if there is an analytic change of coordinates
\begin{equation}\label{lin equa}
x_1 = x +\sum_{m+n \geq 2} c_{m,n} x^m y^n:=H_1(x,y), \hspace{0.5cm} y_1 = y + \sum_{m+n \geq 2} d_{m,n} x^m y^n:=H_2(x,y),
\end{equation}
that reduces \eqref{System general real} to the  linear system
$\dot{x}_1= -y_1$, $\dot{y}_1= x_1$.

The following theorem, which goes back to Poincar\'e and Lyapunov, shows that the linearizability and isochronicity problems are equivalent.
 A proof can be found e.g. in \cite{R-S}.

\begin{theorem}
 The origin of real  system \eqref{System general real} is an
isochronous center if and only if the system is linearizable.
\end{theorem}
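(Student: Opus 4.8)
The statement to prove is the classical Poincaré–Lyapunov fact that the origin of a real system of the form $\dot x = -y + P$, $\dot y = x + Q$ (with the origin already known to be a center) is isochronous if and only if the system is linearizable in the sense of \eqref{lin equa}. The plan is to pass to complex coordinates and exploit the normal form. Introduce $z = x + i y$; then the real system becomes a single complex equation $\dot z = i z + \tilde P(z,\bar z)$, and since the origin is a center, the Poincaré normal form theory guarantees a formal (and, in the center case, analytic) change of coordinates $w = z + O(|z|^2)$ bringing the equation to the Poincaré normal form $\dot w = i w\,(1 + \sum_{k\ge 1} g_k |w|^{2k})$ with $g_k \in \C$. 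The key observation is that isochronicity is equivalent to the vanishing of all the normal-form coefficients $g_k$, i.e. to $\dot w = i w$, which is precisely linearizability.

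**The two directions.** First I would prove that linearizability implies isochronicity: if \eqref{lin equa} conjugates the system to $\dot x_1 = -y_1$, $\dot y_1 = x_1$, then in the new coordinates every orbit near the origin is a circle traversed with angular velocity $1$, hence has period exactly $2\pi$; the conjugacy is a homeomorphism near the origin carrying orbits to orbits and preserving time, so every periodic orbit of the original system also has period $2\pi$, giving isochronicity. For the converse, suppose the center is isochronous. Passing to polar-like coordinates or directly to the complex normal form, one computes the period function of the periodic orbit through a point at distance $r$: using the normal form $\dot w = i w(1 + \sum g_k |w|^{2k})$, on the invariant circle $|w| = \rho$ one has $\dot{\arg w} = \operatorname{Re}(i(1 + \sum g_k \rho^{2k})) = 1 - \sum (\operatorname{Im} g_k)\rho^{2k}$ while $|w|$ is constant only if $\operatorname{Re} g_k = 0$ for all $k$ — but the center condition already forces all $g_k$ to be purely imaginary (the Lyapunov quantities vanish), so the period on $|w|=\rho$ is $T(\rho) = 2\pi / (1 - \sum (\operatorname{Im} g_k)\rho^{2k})$. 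Isochronicity means $T(\rho) \equiv 2\pi$, which forces $\operatorname{Im} g_k = 0$ for every $k$, hence $g_k = 0$ for all $k$, i.e. the normal form is exactly $\dot w = i w$. Thus the normalizing transformation linearizes the system, and unwinding it back to real $(x,y)$ coordinates yields a transformation of the form \eqref{lin equa}.

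**The analyticity/convergence point.** The one genuinely non-trivial ingredient — and the step I expect to be the main obstacle — is that the linearizing transformation is not merely formal but genuinely \emph{analytic} in a neighbourhood of the origin. The formal transformation to the Poincaré normal form always exists, and the argument above shows it is formally the identity linear part composed with a series; but one must invoke the fact that for a center (equivalently, when all the normal-form obstructions to linearization vanish) this series converges. This follows from the classical linearization theorem in the resonant elliptic case: because the system has a center, it admits an analytic first integral of the form $r^2 + \cdots$, and the combination of an analytic first integral with the formal linearizability forces analytic linearizability (one can, for instance, use the analytic first integral as one of the new coordinate "radii" and solve for the angular coordinate, or appeal directly to the theorem as stated in \cite{R-S}). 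I would therefore structure the write-up so that the formal/algebraic equivalence "$g_k = 0\ \forall k \iff$ isochronous" is derived explicitly from the period computation, and the convergence of the linearizing change of variables is cited from \cite{R-S}, where, as the excerpt notes, a complete proof is given.
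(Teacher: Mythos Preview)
The paper does not actually prove this theorem: immediately after stating it, the authors write ``A proof can be found e.g.\ in \cite{R-S}'' and move on. So there is no in-paper argument to compare against; your outline supplies what the paper omits, and it follows the standard Poincar\'e normal-form route that is indeed the one presented in \cite{R-S}.

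Your overall strategy is correct and is exactly the classical argument: pass to the complex variable, put the equation in Poincar\'e--Dulac normal form $\dot w = iw\bigl(1+\sum_{k\ge 1} g_k|w|^{2k}\bigr)$, observe that the center hypothesis kills the coefficients governing $\dot\rho$ while isochronicity kills those governing $\dot\theta$, and then invoke convergence of the normalizing transformation. The one slip is in the bookkeeping of real and imaginary parts. Writing $w=\rho e^{i\theta}$ one has $\dot w/w=\dot\rho/\rho+i\dot\theta$, so
\[
\dot\theta=\operatorname{Im}\!\bigl(i(1+\textstyle\sum g_k\rho^{2k})\bigr)=1+\sum_k \operatorname{Re}(g_k)\,\rho^{2k},
\qquad
\dot\rho/\rho=\operatorname{Re}\!\bigl(i(1+\textstyle\sum g_k\rho^{2k})\bigr)=-\sum_k \operatorname{Im}(g_k)\,\rho^{2k}.
\]
Thus the center condition forces $\operatorname{Im} g_k=0$ (not $\operatorname{Re} g_k=0$ as you wrote), and then isochronicity $T(\rho)\equiv 2\pi$ forces the remaining real parts $\operatorname{Re} g_k=0$, giving $g_k=0$ for all $k$. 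With this correction the argument goes through verbatim, and your handling of the convergence issue --- flagging it as the genuinely nontrivial step and deferring to \cite{R-S} --- is exactly what the paper itself does.
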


Two most common ways to compute isochronicity
quantities (obstacles for isochronicity) are
passing to polar coordinates (the approach used in \cite{C-G-G}) or writing real system \eqref{System general real}
in the complex form
\begin{equation} \label{xZ}
\dot z= i z+Z(z, \bar z),
\end{equation}
using the change $z=x+i y$ and then looking for a linearization
of equation \eqref{xZ} (the approach used in \cite{Llo1997}).
Since we would like to perform the investigation differently
from \cite{C-G-G} and \cite{Llo1997} we use another
computational approach. Namely, we  look for
conditions for linearizability of system
\eqref{System general real} arising from applying
transformation \eqref{lin equa}.

Taking the derivatives  with respect to $t$ on both sides of each equation of \eqref{lin equa} we obtain
\begin{equation*}
\begin{aligned}
\dot{x}_1 = & \dot{x} + \left( \sum_{m+n \geq 2} m c_{m,n} x^{m-1} y^n \right) \dot{x} + \left( \sum_{m+n \geq 2} n c_{m,n} x^m y^{n-1} \right) \dot{y},
\\
\dot{y}_1 = & \dot{y} + \left( \sum_{m+n \geq 2} m d_{m,n} x^{m-1} y^n \right) \dot{x} + \left( \sum_{m+n \geq 2} n d_{m,n} x^m y^{n-1} \right) \dot{y}.
\end{aligned}
\end{equation*}
Hence, the change of coordinates \eqref{lin equa} linearizes system  \eqref{System general real} if it holds that
\begin{equation}\label{diff lin 2}
\begin{aligned}
&\sum_{m+n \geq 2} d_{m,n} x^m y^n + \sum_{p + q \geq 2}^n a_{p,q} x^{p} y^{q} +
\left( \sum_{m+n \geq 2} m c_{m,n} x^{m-1} y^n \right)
\left(-y + \sum_{p + q \geq 2}^n a_{p,q} x^{p} y^{q} \right)
\\
& + \left( \sum_{m+n \geq 2} n c_{m,n} x^m y^{n-1} \right)
 \left(x + \sum_{p + q \geq 2}^n b_{p,q} x^{p} y^{q} \right)  \equiv 0,
\\
&  - \sum_{m+n \geq 2} c_{m,n} x^m y^n + \sum_{p + q \geq 2}^n b_{p,q} x^{p} y^{q} +
\left( \sum_{m+n \geq 2} m d_{m,n} x^{m-1} y^n \right)
\left(-y + \sum_{p + q \geq 2}^n a_{p,q} x^{p} y^{q} \right)
\\
& + \left( \sum_{m+n \geq 2} n d_{m,n} x^m y^{n-1} \right)
 \left(x + \sum_{p + q \geq 2}^n b_{p,q} x^{p} y^{q} \right)  \equiv 0.
 \end{aligned}
\end{equation}
Obstacles for the fulfilment of equations in \eqref{diff lin 2} give us  necessary conditions for  existence of a linearizing change of coordinates \eqref{lin equa} of system \eqref{System general real}.
Thus,  a computational procedure to find necessary  conditions for linearizability can be described as follows.

(1) Write the left hand sides of two equations in \eqref{diff lin 2} in the form
$\sum_{k,l \geq 2} h_1^{(k,l)} x^k y^l$, and $\sum_{k,l \geq 2} h_2^{(k,l)} x^k y^l$,
respectively, where $h_1^{(k,l)}$ and $h_2^{(k,l)}$  are polynomials in the parameters $a_{p,q}, b_{p,q}$  ($p+q \geq 2$)
of system \eqref{System general real}  and $c_{m,n}$, $d_{m,n}$ ($m+n \geq 2$) of \eqref{lin equa}.

(2) Solve the polynomial system $h_i^{(k,l)} = 0$  ($i=1,2$,  $k+l=2$) for the coefficients  $c_{m,n}$, $d_{m,n}$ ($m+n=2$) of \eqref{lin equa}.

 (3) Solve the polynomial system  $h_i^{(k,l)} = 0$ ($i=1,2$, $k+l=3$)  for the coefficients  $c_{m,n}$, $d_{m,n}$ ($m+n=3$) of \eqref{lin equa}.
In general case the system cannot be solved.
However dropping from it two suitable equations  we obtain a system that has a solution.
We denote the two dropped polynomials on the left hand sides of these two
equations  by $i_1$ and $j_1$.

(4) Proceed step-by-step solving the polynomial systems  $h_i^{(k,l)} = 0$ ($i=1,2$, $k+l=r$, $r>3$).
Generally speaking, at all steps when $r=k+l$ is an odd number  the polynomial system  $h_i^{(k,l)} = 0$ ($i=1,2$, $k+l=r$) cannot be solved.
Dropping on each such step  two suitable equations (and  denoting by $i_{(r-1)/2}$ and $j_{(r-1)/2}$ the corresponding polynomials), we obtain a system that has a solution.

This procedure yields the polynomials $i_k$ and $j_k$ which are  polynomials in the parameters $a_{p,q}$ and $b_{p,q}$ of system \eqref{System general real} called the \textit{linearizability quantities}.
It is clear that  system  \eqref{System general real} admits a linearizing change of coordinates \eqref{lin equa} if and only if $i_k = j_k =0$ for all $k>1$.
Thus, the simultaneous vanishing of all linearizability quantities provide conditions which characterize when the system  \eqref{System general real} is linearizable (equivalently
it has an isochronous center at the origin).
The ideal $\mathcal{L} = \langle i_1, j_1, i_2, j_2, ...  \rangle \subset \mathbb{C}[a,b]$  defined by the linearizability quantities is called the \textit{linearizability ideal}
and its affine variety, $V_{\mathcal{L}} = {\bf V} (\mathcal{L})$,  is called the \textit{linearizability variety}.
Therefore, the  linearizability problem will be solved finding the variety $V_{\mathcal{L}}$.

By the Hilbert Basis Theorem there exists a positive integer $k$ such that  $\mathcal{L} =
\mathcal{L}_{k} = \langle i_1, j_1,... ,i_k, j_k \rangle$.
Note that the inclusion $V_{\mathcal{L}} \subset {\bf V} (\mathcal{L}_k)$ holds for any $k \geq 1$.
The opposite inclusion is verified finding the irreducible decomposition of the variety ${\bf V}( \mathcal{L}_k )$ and then checking that any point of each component of the decomposition corresponds to a linearizable system.
The irreducible decomposition can be found  using the routine \texttt{minAssGTZ} \cite{D-L-P-S} (which is based on the algorithm of \cite{GTZ}) of the computer algebra system {\sc Singular} \cite{sing}, however it involves extremely laborious
calculations.


One of the most efficient method to find a linearizing change of coordinates  is the Darboux linearization method.
To construct a Darboux linearization it is convenient to
perform the  substitution
\be \label{zw}
z=x+iy, \qquad w=x-iy
\ee
 obtaining from \eqref{System general real} a system of the form
\begin{equation}  \label{System general complex}
\dot{z} = i (z + X(z,w )),  \qquad \dot{w}= - i ( w + Y(z,w) ),
\end{equation}
and, after the rescaling of time by $i$, the system
\begin{equation} \label{System general complex-1}
\dot{z}= z + X(z,w) , \qquad \dot{w}= - w - Y(z,w).
\end{equation}
Since the change of coordinates \eqref{zw} is analytic,   system \eqref{System general real}
is linearizable if and only if system  \eqref{System general complex-1}
is linearizable.

We remind that a \textit{Darboux factor} of system \eqref{System general complex-1} is a polynomial $f(z,w)$ satisfying
\begin{equation*}
\dfrac{\partial f}{\partial z} \dot{z} + \dfrac{\partial f}{\partial w} \dot{w} = K f,
\end{equation*}
where $K(z,w)$ is a polynomial called the \textit{cofactor of $f$}.
A \textit{Darboux linearization} of system \eqref{System general complex-1} is an analytic change of coordinates  $z_1 =  Z_1 (z,w)$,
$w_1 = W_1 (z,w)$, such that
\begin{equation*}
\begin{aligned}
Z_1(z,w) = \prod_{j=0}^{m} f_j^{\alpha_j}(z,w) = z + \tilde{Z}_1 (z,w),
~~
W_1(z,w) = \prod_{j=0}^{n} g_j^{\beta_j}(z,w) = w + \tilde{W}_1 (z,w),
\end{aligned}
\end{equation*}
which linearizes \eqref{System general complex-1},
where $f_j, g_j \in \mathbb{C}[z,w]$, $\alpha_j, \beta_j \in
\mathbb{C}$, and $\tilde{Z}_1$ and $\tilde{W}_1$ have neither constant  nor
linear terms.
System \eqref{System general complex-1} is said to be Darboux linearizable if it admits a Darboux linearization.
The next theorem provides a way to construct a Darboux linearization using Darboux factors (see e.g. \cite{R-S} for a proof).

\begin{theorem}
\label{Theorem Darboux lineari}
System \eqref{System general complex-1} is Darboux linearizable if and only if there exit $s+1 \geq 1$ Darboux factors $f_0,...,f_s$ with corresponding cofactors $K_0,...,K_s$ and $t+1 \geq 1$ Darboux factors $g_0,...,g_t$ with corresponding cofactors $L_0,...,L_t$  with the following properties:
\begin{enumerate}
	\item[a.] $f_0(z,w) = z +  \cdot \cdot \cdot \mbox{ but }  f_j(0,0) = 1 \mbox{ for } j \geq 1$;
	\item[b.] $g_0(z,w) = w +  \cdot \cdot \cdot \mbox{ but }   g_j(0,0) = 1 \mbox{ for }  j \geq 1$; and
	\item[c.] there are $s+t$ constants $\alpha_1,...,\alpha_s, \beta_1,...,\beta_t \in \mathbb{C}$ such that
\begin{equation}  \label{cond 1}
K_0 + \alpha_1 K_1 + \cdot \cdot \cdot + \alpha_s K_s = 1
\hspace{0.3cm} \mbox{  and  } \hspace{0.3cm}
L_0 + \beta_1 L_1 + \cdot \cdot \cdot + \beta_t L_t = -1.
\end{equation}
\end{enumerate}

The Darboux linearization is then given by
\begin{equation*}
z_1 = H_1(z,w) = f_0 f_1^{\alpha_1} \cdot \cdot \cdot f_s^{\alpha_s},
\qquad
y_1 = H_2(z,w) = g_0 g_1^{\beta_1} \cdot \cdot \cdot g_t^{\beta_t} .
\end{equation*}
\end{theorem}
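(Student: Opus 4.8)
The plan is to prove both implications directly: sufficiency by an explicit logarithmic-derivative computation, and necessity by a unique-factorization argument in $\C[z,w]$.

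For sufficiency, I would assume that Darboux factors $f_0,\dots,f_s$, $g_0,\dots,g_t$ with cofactors $K_0,\dots,K_s$, $L_0,\dots,L_t$ and constants $\alpha_1,\dots,\alpha_s,\beta_1,\dots,\beta_t$ with properties (a)--(c) are given, set $\alpha_0=\beta_0=1$, and define $z_1=\prod_{j=0}^{s}f_j^{\alpha_j}$ and $w_1=\prod_{j=0}^{t}g_j^{\beta_j}$. First I would check that this is an analytic near-identity change of coordinates: since $f_j(0,0)=1$ for $j\ge1$, the principal branch of $f_j^{\alpha_j}=\exp(\alpha_j\log f_j)$ is analytic near the origin and equals $1$ there, so together with $f_0=z+\cdots$ one gets $z_1=z+\tilde Z_1$ with $\tilde Z_1$ of order $\ge2$, and likewise $w_1=w+\tilde W_1$. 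Then, differentiating $\log z_1$ along the flow of \eqref{System general complex-1} on the open set $z_1\ne0$ and using the defining relation $\dot f_j=K_jf_j$ of a Darboux factor,
\[
\frac{\dot z_1}{z_1}=\sum_{j=0}^{s}\alpha_j\frac{\dot f_j}{f_j}=\sum_{j=0}^{s}\alpha_jK_j=K_0+\alpha_1K_1+\cdots+\alpha_sK_s=1,
\]
so $\dot z_1=z_1$ identically by analyticity, and the same computation with \eqref{cond 1} gives $\dot w_1=-w_1$; hence the system is Darboux linearizable, with the linearizing map exactly the one displayed in the statement.

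For necessity, I would start from a Darboux linearization $z_1=\prod_j f_j^{\alpha_j}=z+\tilde Z_1$, $w_1=\prod_j g_j^{\beta_j}=w+\tilde W_1$, factor each polynomial $f_j$ into irreducibles and discard zero exponents, so that the $f_j$ may be taken pairwise non-associate irreducible with all $\alpha_j\ne0$. From $\dot z_1=z_1$ together with $\dot z_1=z_1\sum_j\alpha_j(\dot f_j/f_j)$, clearing denominators gives
\[
\sum_j\alpha_j\,\dot f_j\prod_{k\ne j}f_k=\prod_k f_k;
\]
reducing this identity modulo a fixed $f_{j_0}$ leaves only $\alpha_{j_0}\dot f_{j_0}\prod_{k\ne j_0}f_k\equiv0$, and since $\C[z,w]$ is a UFD and $f_{j_0}$ is coprime to $\prod_{k\ne j_0}f_k$ this forces $f_{j_0}\mid\dot f_{j_0}$, i.e. $\dot f_{j_0}=K_{j_0}f_{j_0}$ for a polynomial cofactor $K_{j_0}$. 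Thus every $f_j$ is a Darboux factor and $\sum_j\alpha_jK_j=1$. Since $z_1(0,0)=0$ while $\tilde Z_1$ has no linear part, exactly one factor vanishes at the origin; relabelling it $f_0$, analyticity of $z_1$ forces its exponent to equal $1$ and a rescaling forces its linear part to be $z$, so $f_0=z+\cdots$, while the remaining $f_j$ satisfy $f_j(0,0)\ne0$ and may be rescaled so that $f_j(0,0)=1$ (which changes neither the Darboux-factor property nor its cofactor). This yields (a) and the first relation in \eqref{cond 1}; the identical argument applied to $w_1$ gives (b) and the second relation.

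The step I expect to be the main obstacle is the necessity direction — specifically, showing that the a priori arbitrary polynomial pieces of a Darboux linearization can always be reorganized into genuine Darboux factors whose cofactors satisfy the affine conditions \eqref{cond 1}. This is where the coprime-irreducible reduction is essential, as is the correct identification of the unique origin-vanishing factor together with the verification that its exponent must be $1$ and its linear part $z$. The sufficiency direction, by comparison, is a routine logarithmic-derivative calculation.
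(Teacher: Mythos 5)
The paper does not actually prove this theorem --- it simply cites Romanovski--Shafer \cite{R-S} --- but your argument is the standard proof given there (logarithmic-derivative computation for sufficiency, irreducible-factor/UFD reduction for necessity) and it is correct. The only step deserving one more line is the assertion that analyticity of $z_1$ forces the unique origin-vanishing irreducible factor to occur with exponent exactly $1$: this follows by examining $z_1$ near a smooth point of that factor's zero set, where a non-integer exponent would destroy analyticity, together with the fact that $z_1$ vanishes to order one at the origin.
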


Sometimes we cannot find enough Darboux factors to construct  Darboux linearizations of both equations of the system.
Let say that we can find only transformation $z_1$, which linearizes the first equation of \eqref{System general complex-1}.
If we can find a first integral of system \eqref{System general complex-1} of the form $\Psi=xy+ h.o.t.$
then the second equation of \eqref{System general complex-1} can be linearized by the transformation $w_1 = \dfrac{\Psi}{z_1}$.
We note also that   if  system \eqref{System general complex-1}  has $p$ irreducible
Darboux factors $f_1, ... , f_p$ with associated cofactors $K_1, ... , K_p$, satisfying
$s_1K_1+...+s_pK_p = 0$,     then  the  function $ H = f_1^{s_1}... f_p^{s_p}$
 is a first integral of \eqref{System general complex-1}.


\section{Linearizability of system \eqref{system 1}} \label{Sec:Results}

In this section we obtain the necessary and sufficient conditions for linearizability of system \eqref{system 1}
with complex parameters.

Without loss of generality, we suppose that $b_{02} = - b_{20}$  in system \eqref{system 1}. Otherwise, if $a_{02} +a_{20}=0$, we can apply
the transformation $\tilde{x}=x+(b_{02}+ b_{20})y/(a_{02} +a_{20})$ and  $\tilde{y}=y-(b_{02}+ b_{20})x/(a_{02} +a_{20})$.
When $a_{02} +a_{20}=0$, we only need to make the change $(x,y)\to (y,x)$ together with the time scaling
 $dt=-d\tau$ and   obtain the same effect.

The following theorem gives the conditions for linearizability of system  \eqref{system 1}.

\begin{theorem} \label{Theorem 1}
Complex system \eqref{system 1} with $b_{02} = - b_{20}$ is
linearizable at the origin if one of the following conditions holds:
\begin{enumerate}
	\item[(1)]  $	4 a_{20}^2 + a_{11}^2 + 4 a_{11}  b_{20} + 4 b_{20}^2 - 4 a_{20} b_{11} + b_{11}^2 =
	r_{20}+r_{02}=a_{02}+a_{20}=0$,
	\item[(2)] $  a_{02} = r_{02} = a_{11} + 2 b_{20} = b_{11} - 4 a_{20} =  r_{11} + b_{20}^2 = r_{20} -
 a_{20} b_{20} = 0$,
	\item[(3)] $ 4 a_{02} + a_{20} = a_{11} + 2 b_{20} = 2 b_{11} - a_{20} = 4 r_{02} + a_{20} b_{20} =  r_{11} + b_{20}^2 = r_{20} - a_{20} b_{20} = 0$,
 	\item[(4)] $  a_{02} = r_{02} = a_{11} + 2 b_{20} =  b_{11} - a_{20} =  r_{20} - a_{20} b_{20} = 0$,
 \item[(5)] $9 a_{11}^2-12 a_{11} b_{20}+4 b_{20}^2+4 b_{11}^2=
 -6 a_{11} b_{20}+4 b_{20}^2+2 a_{20} b_{11}-b_{11}^2 =
 6 a_{20} a_{11}-4 a_{20} b_{20}-3 a_{11} b_{11}+10 b_{20} b_{11} =
  4 a_{20}^2-12 a_{11} b_{20}+24 b_{20}^2-b_{11}^2 =
  - \frac{4}{3}  b_{20}^2 -  \frac{b_{11}^2}{3} + r_{11}=
   \frac{4}{9} a_{20} b_{20} +  \frac{a_{11} b_{11}}{6} -  \frac{b_{20} b_{11}}{9} + r_{02}=
    \frac{a_{20} a_{11}}{6} -  \frac{a_{20} b_{20}}{3} +  \frac{a_{11} b_{11}}{12} - \frac{b_{20} b_{11}}{6} + r_{20}+r_{02}=
    a_{02}+ \frac{a_{20}}{3} - \frac{b_{11}}{3} = 0$.
\end{enumerate}
\end{theorem}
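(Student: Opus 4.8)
The plan is to establish linearizability of each of the five families by exhibiting, in each case, an explicit Darboux linearization using Theorem \ref{Theorem Darboux lineari}. First I would pass from the real form \eqref{system 1} to the complex form \eqref{System general complex-1} via the substitution \eqref{zw} followed by rescaling of time, so that the problem becomes one of finding Darboux factors $f_0 = z + \cdots$, $g_0 = w + \cdots$, together with auxiliary Darboux factors $f_j, g_j$ normalized to $1$ at the origin, whose cofactors satisfy the affine relations \eqref{cond 1}. The bookkeeping is: for each of the conditions (1)--(5), substitute the corresponding polynomial constraints into the complexified system, look for invariant algebraic curves of low degree (typically linear or quadratic in $z,w$), compute their cofactors $K_i, L_j$ (which are linear in $z, w$ with coefficients depending on the surviving parameters), and then solve the linear system over $\mathbb{C}$ expressing $1$ (resp. $-1$) as the required combination $K_0 + \sum \alpha_i K_i$ (resp. $L_0 + \sum \beta_j L_j$). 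Once the $\alpha_i, \beta_j$ are found, $z_1 = f_0 \prod f_i^{\alpha_i}$ and $w_1 = g_0 \prod g_j^{\beta_j}$ is the linearizing change, and one checks it has the normal form $z + \text{h.o.t.}$, $w + \text{h.o.t.}$ demanded by \eqref{lin equa}.

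Second, in cases where not enough Darboux factors can be located to linearize both equations directly, I would fall back on the remark closing Section \ref{Sec:Prel}: if the complexified system has irreducible Darboux factors with cofactors summing to zero (after scaling), their monomial combination is a first integral $\Psi = zw + \text{h.o.t.}$; then having linearized the first equation via $z_1$, the second is linearized by $w_1 = \Psi / z_1$. I expect this device to be needed for the more degenerate families, likely condition (4) and parts of (5), where the system may carry only one "good" invariant line through the origin. For the reverse (necessity) direction, which is not claimed in the statement as written — the theorem says "if" — I would note that the necessity follows from the computational procedure of Section \ref{Sec:Prel}: computing the first several linearizability quantities $i_k, j_k$, forming the ideal $\mathcal{L}_k$, and verifying via \texttt{minAssGTZ} in \textsc{Singular} that the irreducible decomposition of $\mathbf{V}(\mathcal{L}_k)$ consists exactly of the five components (1)--(5); but for the stated "if" direction only the explicit constructions above are required.

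The main obstacle will be condition (5). Its defining ideal is generated by eight messy polynomials and does not obviously correspond to a simple coordinate subspace, so the surviving parameter set after imposing (5) is a genuine curve/surface in parameter space rather than a linear slice; consequently the Darboux factors will have cofactors with complicated coefficients, and solving \eqref{cond 1} — as well as checking that the constructed $z_1, w_1$ are genuinely well-defined analytic series with the right linear part — is where the real work lies. A secondary difficulty is that some of the $\alpha_i, \beta_j$ may be non-rational or the required invariant curves may be reducible only over $\mathbb{C}$, so care is needed in choosing branches of the fractional powers $f_i^{\alpha_i}$ so that the product is single-valued near the origin. For the remaining cases (1)--(4), which cut out (near-)linear subspaces, I expect the invariant lines $z = \text{const}\cdot w$, $w = \text{const}\cdot z$, and one or two additional conics to suffice, and the verification to be routine algebra that I would organize case by case, displaying in each the factors, their cofactors, the constants in \eqref{cond 1}, and the resulting transformation, then simply confirming the identity $\partial_z f\,\dot z + \partial_w f\,\dot w = Kf$ by direct substitution.
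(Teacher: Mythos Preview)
Your approach is essentially the one the paper takes: pass to the complex form via \eqref{zw}, find explicit Darboux factors, and apply Theorem~\ref{Theorem Darboux lineari}, falling back on the first-integral device when necessary. However, your guesses about where the difficulties lie are inverted. In the paper it is Case~(1), not (4) or (5), that requires the first-integral trick: after complexification one finds $l_1=z$ and four linear factors $l_3,\dots,l_6$ whose coefficients involve nested radicals $\xi,\eta_\pm$; only the $z$-equation is linearized directly, and $w_1$ is obtained as $(\Psi-1)/z_1$ from a Darboux first integral $\Psi$. Conversely, Case~(5) turns out to be the cleanest: once the variety is parametrized (e.g.\ by $a_{02},b_{20}$ with $a_{20}=3a_{02}\pm 4ib_{20}$), the complexified system has the invariant line $w=0$ and a quadratic factor, and the linearization $z_1=l_1 l_4^{-1}$, $w_1=l_2 l_4^{-1/2}$ has simple rational exponents. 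Cases~(2)--(4) each admit a full Darboux linearization with two auxiliary factors. One technical point your plan omits: the exponents $\alpha_i,\beta_j$ produced are rational functions of the parameters and blow up on proper subvarieties (e.g.\ $a_{20}=0$ in Cases~(2)--(3), the vanishing of $C=\sqrt{a_{20}^2-4b_{20}^2-4r_{11}}$ in Case~(4), $b_{20}=0$ in Case~(5)); the paper disposes of these exceptional loci by the Zariski-closure argument---linearizable systems form a closed subvariety of parameter space---and you should plan to do the same.
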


\begin{proof}
Using the computer algebra system {\sc Mathematica} following the computational procedure described in the previous section  we computed the first eight pairs of the linearizability quantities for system \eqref{system 1}.
The first pair is
\begin{equation*}
\begin{aligned}
i_1 = & \frac{1}{9} (10 a_{02}^2 + a_{11}^2 + 10 a_{02} a_{20} + 4 a_{20}^2 -
   a_{02} b_{11} - 5 a_{20} b_{11} + b_{11}^2 + 4 a_{11} b_{20} + 4 b_{20}^2 ),
   \\
j_1 = & \frac{1}{3} ( a_{02} a_{11} + a_{11} a_{20} - 2 a_{02} b_{20} - 2 a_{20} b_{20}  +  4 r_{02} + 4 r_{20} ),
\end{aligned}
\end{equation*}
and the second pair reduced by the Groebner basis of $ \la i_1,j_1 \ra $ is
\begin{equation*}
\begin{aligned}
\tilde i_2 = & \frac{1}{750} (-10 a_{11}^2 a_{20}^2 + 200 a02 a_{20}^3 + 160 a_{20}^4 +
   10 a_{11}^2 a_{20} b_{11} - 600 a_{02} a_{20}^2 b_{11} - 520 a_{20}^3 b_{11}
   \\
   & +
   6 a_{11}^2 b_{11}^2 + 490 a_{02} a_{20} b_{11}^2 + 464 a_{20}^2 b_{11}^2 -
   96 a_{02} b_{11}^3 - 110 a_{20} b_{11}^3 + 6 b_{11}^4 - 170 a_{11}^3 b_{20}
   \\
   & -
   720 a_{11} a_{20}^2 b_{20} + 720 a_{11} a_{20} b_{11} b_{20} - 146 a_{11} b_{11}^2 b_{20} -
   550 a_{11}^2 b_{20}^2 + 2600 a_{02} a_{20} b_{20}^2
   \\
   & + 3080 a_{20}^2 b_{20}^2 -
   1580 a_{02} b_{11} b_{20}^2 - 2060 a_{20} b_{11} b_{20}^2 + 154 b_{11}^2 b_{20}^2 -
   160 a_{11} b_{20}^3 + 520 b_{20}^4
   \\
   & - 100 a_{11} a_{20} r_{02} + 560 a_{11} b_{11} r_{02} -
   6800 a_{20} b_{20} r_{02} + 2180 b_{11} b_{20} r_{02} + 5250 r_{02}^2
   \\
   & - 55 a_{11}^2 r_{11} +
   50 a_{02} a_{20} r_{11} - 170 a_{20}^2 r_{11} + 5 a_{02} b_{11} r_{11} + 225 a_{20} b_{11} r_{11} -
   55 b_{11}^2 r_{11}
   \\
   & - 220 a_{11} b_{20} r_{11} - 220 b_{20}^2 r_{11} - 100 a_{11} a_{20} r_{20} +
   560 a_{11} b_{11} r_{20} + 800 a_{02} b_{20} r_{20}
   \\
   & - 6000 a_{20} b_{20} r_{20} +
   2180 b_{11} b_{20} r_{20} + 8500 r_{02} r_{20} + 3250 r_{20}^2),
\\
\tilde j_2 = & \frac{1}{120} (2 a_{11}^3 a_{20} + 8 a_{11} a_{20}^3 - a_{11}^3 b_{11} - 12 a_{11} a_{20}^2 b_{11} +
   6 a_{11} a_{20} b_{11}^2 - a_{11} b_{11}^3 - 4 a_{11}^2 a_{20} b_{20}
   \\
   & + 48 a_{02} a_{20}^2 b_{20} -
   6 a_{11}^2 b_{11} b_{20} + 16 a_{02} a_{20} b_{11} b_{20} + 56 a_{20}^2 b_{11} b_{20} -
   4 a_{02} b_{11}^2 b_{20}
   \\
   & - 8 a_{20} b_{11}^2 b_{20} - 2 b_{11}^3 b_{20} -
   40 a_{11} a_{20} b_{20}^2 - 12 a_{11} b_{11} b_{20}^2 - 48 a_{20} b_{20}^3 - 8 b_{11} b_{20}^3
   \\
   & - 24 a_{11}^2 r_{02} - 16 a_{20}^2 r_{02} + 64 a_{20} b_{11} r_{02} + 4 b_{11}^2 r_{02} -
   64 a_{11} b_{20} r_{02} - 32 b_{20}^2 r_{02}
   \\
   & + 128 a_{02} b_{20} r_{11} +
   128 a_{20} b_{20} r_{11} - 128 r_{02} r_{11} - 8 a_{11}^2 r_{20} - 32 a_{02} a_{20} r_{20} +
   16 a_{20}^2 r_{20}
   \\
   & - 80 a_{02} b_{11} r_{20} - 80 a_{20} b_{11} r_{20} + 20 b_{11}^2 r_{20} +
   32 b_{20}^2 r_{20} - 128 r_{11} r_{20}).
\end{aligned}
\end{equation*}
The other polynomials have very long expressions, so we do not present  them here, however, the reader can easily compute them using any
available computer algebra system.

To find conditions for linearizability we have to ``solve"
the system $ i_1=\dots =i_8=j_1=\dots = j_8=0$,
or, more precisely, to find the irreducible
decomposition of the variety
${\bf V} (\mathcal{L}_8)$ of the ideal  $\mathcal{L}_8= \langle i_1, j_1,...,i_8, j_8 \rangle $.
Although  nowadays there are few algorithms for computing
such decompositions the  calculations seldom can be completed
over the field of rational numbers for non-trivial
ideals due to high complexity of Groebner bases computations.
We tried to perform the  decomposition of the variety of
${\bf V} (\mathcal{L}_8)$  using the routine
  \texttt{minAssGTZ} \cite{D-L-P-S}  of  {\sc Singular} \cite{sing},
  however we have not succeeded  to complete computations
  neither  over $\mathbb{Q}$ nor over the  field $\mathbb{Z}_{32003}$.

  To find the decomposition we   proceed as follows. First, we use
  the  conditions for isochronicity of real system \eqref{system 1} obtained in \cite{Llo1997},
  which are conditions (2)--(4) of the statement of the  theorem   and the condition
\be \label{cond1*}
  a_{02}  + a_{20} =  a_{11} + 2 b_{20} = b_{11} - 2 a_{20} = r_{02} +  r_{20}  = 0.
\ee
It is clear that under condition \eqref{cond1*} and conditions (2)--(4) of the theorem 
complex system \eqref{system 1} is linearizable as well.

Denote by $J_1$ the ideal generated by polynomials defining condition \eqref{cond1*},
that is,
$$
J_1=\la  a_{02}  + a_{20} ,  a_{11} + 2 b_{20} , b_{11} - 2 a_{20} , r_{02} +  r_{20}\ra,
$$
and by $J_2, J_3, J_4$ ideals generated by polynomials of conditions (2)--(4) of the theorem.

 As we have mentioned above
 we are not able to compute the decomposition
 of the variety ${\bf V} (\mathcal{L}_8)$ of $\mathcal{L}_8$ (that is, to find
 the minimal associate primes of  $\mathcal{L}_8$)
 even over fields of finite characteristic.
 However using the ideals $ J_1-J_4$
 we can  find the decomposition
 of the variety ${\bf V} (\mathcal{L}_8)$.
 The idea is to  subtract from ${\bf V} (\mathcal{L}_8)$ the
 components defined by the ideals  $ J_1-J_4$
 and then  find the decomposition of the remaining variety.
 For this aim we use the theorem (see, e.g.
 \cite[Chapter 4]{Cox}) for the proof),
 which says that given two ideals
  $I$ and $H$  in $\kxn$,
\be\label{geincd}
\overline{\vv(I) \setminus \vv(H)} \subset \vv(I : H)\,,
\ee
where the overline indicates the Zariski closure. Moreover, if $k = \C$ and $I$ is a radical
ideal, then
\be\label{geincd2-IH}
\overline{\vv(I) \setminus \vv(H)} = \vv(I : H).
\ee
Thus, to remove the components ${\bf V}(J_1), ..., {\bf V}(J_4)$ from ${\bf V}(\mathcal{L}_8)$,
we  compute  over the field  $\mathbb{Z}_{32003}$  with the  \texttt{intersect} of Singular
 the intersection
  $ J=  J_1 \cap  J_2 \cap  J_3 \cap {J}_4 $ (clearly,   $ \vv(J) = \vv(\ J_1) \cup \vv(\ J_2) \cup \vv( J_3) \cup \vv({J}_4)$),
 then with the  \texttt{radical} we compute $R=\sqrt{\mathcal{L}_8}$, then  with  \texttt{quotient}
  we compute the ideal  $ G= R:J  $ and, finally, with \texttt{minAssGTZ}
  we compute the minimal associate primes of $G$,
  obtaining that $G=G_1\cap G_2$, where \\
 $G_1=\la
  r_{20}+r_{02}
, a_{02}+a_{20}
a_{20}^2+8001 a_{11}^2+a_{11} b_{20}+b_{20}^2-a_{20} b_{11}+8001 b_{11}^2\ra$\\
and \\
$
G_2= \la a_{02}+10668 a_{20}-10668 b_{11},
 a_{11} r_{02}-10667 b_{20} r_{02}+14224 a_{20} r_{11}-14224 b_{11} r_{11},
a_{20} r_{02}+16000 b_{11} r_{02}+16001 a_{11} r_{11}-b_{20} r_{11},
r_{20}^2+6 r_{20} r_{02}+9 r_{02}^2+r_{11}^2,
b_{11} r_{20}+3 b_{11} r_{02}-16000 a_{11} r_{11}-b_{20} r_{11},
b_{20} r_{20}+3 b_{20} r_{02}-16001 a_{20} r_{11}-8001 b_{11} r_{11},
a_{11} r_{20}-2 b_{20} r_{02}-a_{20} r_{11}-16001 b_{11} r_{11},
a_{20} r_{20}-15997 b_{11} r_{02}+8003 a_{11} r_{11}-16001 b_{20} r_{11},
a_{11} b_{11}-2 b_{20} b_{11}+4 r_{20}+6 r_{02},
b_{20}^2+8001 b_{11}^2+8000 r_{11},
\\
a_{11} b_{20}-10668 a_{20} b_{11}+10668 b_{11}^2+16001 r_{11},
a_{20} b_{20}-16001 b_{20} b_{11}+16000 r_{20},
a_{11}^2-14224 a_{20} b_{11}-7111 b_{11}^2-10668 r_{11},
a_{20} a_{11}+b_{20} b_{11}+r_{20}+3 r_{02},
a_{20}^2-a_{20} b_{11}+8000 b_{11}^2+3 r_{11},
a_{20} b_{11} r_{11}-16001 b_{11}^2 r_{11}-9 r_{20} r_{02}-27 r_{02}^2-6 r_{11}^2,
b_{20} b_{11} r_{02}-8001 b_{11}^2 r_{11}+8003 r_{02}^2+r_{11}^2,
a_{20} b_{11}^2-16001 b_{11}^3-18 b_{20} r_{02}-6 a_{20} r_{11},
b_{11}^2 r_{02} r_{11}+b_{20} b_{11} r_{11}^2+15997 r_{20} r_{02}^2+15988 r_{02}^3-r_{20} r_{11}^2+15997 r_{02} r_{11}^2,
b_{11}^3 r_{02}+b_{20} b_{11}^2 r_{11}-9 b_{20} r_{02}^2-3 b_{11} r_{02} r_{11}-4 b_{20} r_{11}^2
\ra.
$ \\
Since $8001 \equiv \frac 14 {\ \rm mod \ } 32003$, lifting the ideal
$G_1$ from the ring of polynomials over  the field $\mathbb{Z}_{32003}$ to the ring of polynomials over  the field  $\Q$ we obtain
polynomials given in  condition (1) of the theorem.

Similarly, lifting the ideal $G_2$   we obtain the ideal which we denote by $J_5$
(the lifting can be performed algorithmically using the algorithm of \cite{WGD} and the {\sc Mathematica} code of \cite{GCPRS}).
Simple computations show that $\vv(J_5)$ is
 the same set  as the set  given by conditions (5) of the theorem.

To check the correctness of the obtained
conditions we use the procedure described  in \cite{R-P}. 
First, we computed the ideal
$\tilde J=  J_1 \cap \ J_2 \cap  J_3 \cap {J}_4 \cap {J}_5 $, which defines the union
of all five  components of the theorem and
   checked  that Groebner bases of all
ideals $\la \tilde{J}, 1-w i_k \ra$,    $\la \tilde{J}, 1-w j_k \ra$ (where $k=1,\dots, 8$ and $w$ is a new variable) computed over $\mathbb{Q}$
are $\{1\}$. By the Radical Membership Test (see e.g. \cite{Cox, R-S}) it means that
\be \label{invc2}
\vv(\mathcal{L}_8)\subset \vv (\tilde{J}).
\ee
  To check the opposite inclusion
it is sufficient to check that
 \be  \label{clm}
 \la \mathcal{L}_8, 1-w f \ra =\la 1 \ra
 \ee
for  all polynomials $f$ from a basis
of     $ \tilde{J}$. Unfortunately,  we were not able
to perform the check over $\mathbb{Q}$,
however we have checked that \eqref{clm}
holds over few fields of finite characteristic.
It yields that \eqref{clm} holds with high probability \cite{EA}
\footnote{For this reason we say in the statement of Theorem \ref{Theorem 1} that conditions (1)-(5) are only necessary, but not
necessary and sufficient conditions for linearizability of system \eqref{system 1}.}.

We now prove that under each of conditions
(1)-(5) of the theorem the system is linearizable.

\medskip

$Case \hspace{0.1cm} (1):$ In this case
 $a_{11} = -2 b_{20} \pm  (2 a_{20} - b_{11})i $.
We consider only the case
 $a_{11} = -2 b_{20} +  (2 a_{20} - b_{11})i $,
 since
 when   $a_{11} = -2 b_{20} - (2 a_{20} - b_{11})i $
 the consideration is analogous.
In this case system \eqref{system 1} becomes
\begin{equation} \label{case a real}
\begin{aligned}
\dot{x}= &  - y + a_{20} x^2 +(-2 b_{20} + (2 a_{20} - b_{11})  i) x y  - a_{20} y^2 + r_{20} x^3  + r_{11} x^2 y -  r_{20} x y^2,
\\
\dot{y}= & x + b_{20} x^2 + b_{11} x y - b_{20} y^2 + r_{20} x^2 y  + r_{11} x y^2 -  r_{20} y^3.
\end{aligned}
\end{equation}
After the   substitution \eqref{zw} we obtain from   \eqref{case a real} the system
\begin{equation} \label{case a complex}
\begin{aligned}
\dot{z}= & z-     (i a_{20} -   b_{20}) z^2   -\frac{1}{4}( r_{11} + 2 i r_{20}) z^3 +\frac{1}{4}   (r_{11} -2 i r_{20}) z w^2,
\\
\dot{w}= &  -w +\frac{1}{2} ( i b_{11}-2 i a_{20}) z^2  - \frac{1}{2}(   i b_{11} + 2 b_{20}) w^2
-\frac{1}{4} (r_{11} + 2 i r_{20}) z^2 w + \frac{1}{4}(r_{11}   - 2 i r_{20}) w^3.
\end{aligned}
\end{equation}
System \eqref{case a complex}
has  Darboux factors
\begin{equation*}
\begin{aligned}
l_1 =& z, 
 \\
l_3 =&   1 + \frac{1}{16}    ( -8 i a_{20} + 8 b_{20} -      4 \sqrt{2}       \eta_-  ) z
        ~  +  \frac{1}{4}  (i b_{11} + 2 b_{20} -     i \xi) w,
 \\
l_4 =& 1 +
  \frac{1}{16}  (-8 i a_{20} + 8 b_{20} +
     4 \sqrt{2}       \eta_-) z ~
       +  \frac{1}{4}  (i b_{11} + 2 b_{20} - i \xi) w;,
\\
l_5 =&   1 +  \frac{1}{16} (-8 i a_{20} + 8 b_{20} -
     4 \sqrt{2}        \eta_+) z
       ~+ \frac{1}{4}(i b_{11} + 2 b_{20} + i \xi) w,
\\
l_6 =&   1 + \frac{1}{16}  (-8 i a_{20} + 8 b_{20} +
     4 \sqrt{2}       \eta_+) z +
  \frac{1}{4} (i b_{11} + 2 b_{20} + i \xi) w,
 \end{aligned}
\end{equation*}
where $ \xi = \sqrt{b_{11}^2 - 4 i b_{11} b_{20} - 4 b_{20}^2 - 4 r_{11} + 8 i r_{20}}$ and
\[
\eta_{\pm}= \sqrt{-2 a_{20}^2 + 2 a_{20} b_{11} - b_{11}^2 - 8 i a_{20} b_{20} +
       2 i b_{11} b_{20} + 2 b_{20}^2 + 2 r_{11}
        + 4 i r_{20} \pm 2 a_{20} \xi \mp b_{11} \xi }.
\]
It is easy to verify that the first of  conditions \eqref{cond 1} is satisfied with $f_0 = l_1$,  $f_1 = l_4$, $f_2 = l_5$,  $f_3 = l_6$,  and
 \begin{equation*}
 \begin{aligned}
\alpha_1 =&    - \frac{b_{11} - 2 i b_{20} + \xi}{2 \xi},
\\
\alpha_2 =&
\frac{ b_{11} \eta_+ - 2 i  b_{20} \eta_+ - b_{11} \eta_- + 2 i  b_{20} \eta_- - 2 i  \sqrt{2} a_{20} \xi +
 2 \sqrt{2} b_{20} \xi - \eta_+ \xi - \eta_- \xi}{4 \eta_+ \xi},
\\
\alpha_3 =&  \frac{b_{11} (\eta_+ + \eta_-) + (2 i  \sqrt{2} a_{20} - \eta_+ + \eta_-) \xi -
 2 i  b_{20} (\eta_+ + \eta_- - i  \sqrt{2} \xi)}{4 \eta_+ \xi}.
 \end{aligned}
\end{equation*}
Moreover, the system has the Darboux first integral
\[
\Psi(z,w)=l_3^{s_1} l_4^{s_2} l_5^{s_3} l_6^{s_4}=1 - \frac{i}{2\sqrt{2}} \eta_-\xi zw + o(||(z,w)||^3),
\]
where
$
s_1 =  1, ~ s_2 =  -1, ~
s_3 = - \frac{\eta_-}{\eta_+}, ~
s_4 =   \frac{\eta_-}{\eta_+},
$
   $f_1 = l_3$, $f_2 = l_4$, $f_3 = l_5$, and $f_4 = l_6$. 

Therefore, the system  is  linearizable 
 by the substitution
 \begin{equation*}
 z_1 =  l_1 l_4^{\alpha_1} l_5^{\alpha_2} l_6^{\alpha_3},
\hspace{0.3cm}
 w_1 =\frac{2\sqrt{2}(\Psi(z,w)-1)i}{\eta_-\xi z_1}.
 \end{equation*}

\medskip

$Case \hspace{0.1cm} (2):$ In this case
system \eqref{system 1} becomes
\begin{equation} \label{case b-1 real}
\begin{aligned}
\dot{x}= &  - y + a_{20} x^2 + a_{20} b_{20} x^3 - 2 b_{20} x y - b_{20}^2 x^2 y=   (b_{20} x+1) (a_{20} x^2-b_{20} x y-y),
\\
\dot{y}= & x + b_{20} x^2 + 4 a_{20} x y + a_{20} b_{20} x^2 y - b_{20} y^2 - b_{20}^2 x y^2,
\end{aligned}
\end{equation}
and
after  substitution \eqref{zw} we have the system
\begin{equation} \label{case b-1 complex}
\begin{aligned}
\dot{z}= & z + \left(b_{20} - i \frac{5}{4} a_{20} \right)  z^2
- \frac{i}{2} a_{20} z w
+ i \frac{3}{4} a_{20} w^2
+ \left( \frac{b_{20}^2}{4}  - \frac{i}{4} a_{20} b_{20} \right) z^3
\\
& - \frac{i}{2} a_{20} b_{20} z^2 w
- \left( \frac{ b_{20}^2}{4} +  \frac{i}{4} a_{20} b_{20} \right) z w^2,
\\
\dot{w}= & - w + i \frac{3}{4} a_{20} z^2
- \frac{i}{2} a_{20} z w
- \left( b_{20} + i \frac{5}{4} a_{20} \right) w^2
+ \left(  \frac{b_{20}^2}{4} - \frac{i}{4} a_{20} b_{20} \right) z^2 w
\\
& - \frac{i}{2} a_{20} b_{20} z w^2
- \left( \frac{b_{20}^2}{4} + \frac{i}{4} a_{20} b_{20}  \right) w^3.
\end{aligned}
\end{equation}
System \eqref{case b-1 complex} has the Darboux factors
\begin{equation*}
\begin{aligned}
 l_1 =& z + \left(\frac{b_{20}}{2} + \frac{i}{4} a_{20} \right) z^2
  + \left(\frac{b_{20}}{2} + \frac{i}{2} a_{20} \right) z w
  + \frac{i}{4}a_{20} w^2,
 \\
 l_2 =& w - \frac{i}{4} a_{20} z^2
 + \left( \frac{b_{20}}{2} - \frac{i}{2} a_{20} \right) z w
 + \left(\frac{b_{20}}{2} - \frac{i}{4} a_{20} \right) w^2,
 \\
l_3 =&   1 + \frac{b_{20}}{2} z + \frac{b_{20}}{2} w,
\\
l_4 =& 1 - \frac{i}{2} \left(4 a_{20} + i b_{20} \right) z + \frac{1}{2} \left( b_{20} + 4 i a_{20} \right) w,
 \end{aligned}
\end{equation*}
which  when $a_{20} \neq 0$  allow to construct the Darboux linearization
 \begin{equation}
 \label{AC2}
 z_1 =  l_1 l_3^{\alpha_1} l_4^{\alpha_2},
\hspace{0.3cm}
 w_1 =   l_2 l_3^{\beta_1} l_4^{\beta_2},
 \end{equation}
where
 \begin{equation*}
 \begin{aligned}
\alpha_1 =& - \frac{6 a_{20} - i b_{20}}{4 a_{20}},
\hspace{0.3cm}
\alpha_2 = - \frac{2 a_{20} + i b_{20}}{4 a_{20}},
\\
 \beta_1 =& - \frac{6 a_{20} + i b_{20}}{4 a_{20}},
 \hspace{0.3cm}
\beta_2 = - \frac{2 a_{20} - i b_{20}}{4 a_{20}}.
 \end{aligned}
\end{equation*}

%

Since the set of linearizable system is an affine variety and therefore it is a closed set in the Zariski topology,
the system is linearizable  also when $a_{20} = 0$.

\medskip

$Case \hspace{0.1cm} (3):$ In this case system \eqref{system 1} becomes
\begin{equation} \label{case b-2 real}
\begin{aligned}
\dot{x}= &  - y + a_{20} x^2 - 2 b_{20} x y - \frac{a_{20}}{4} y^2
 + x \left( a_{20} b_{20} x^2 - b_{20}^2 x y - \frac{ a_{20} b_{20}}{4}  y^2 \right),
\\
\dot{y}= & x + b_{20} x^2 + \frac{a_{20}}{2} x y - b_{20} y^2
+ y \left( a_{20} b_{20} x^2 - b_{20}^2 x y - \frac{a_{20} b_{20}}{4} y^2 \right),
\end{aligned}
\end{equation}
and 
the corresponding system of the form \eqref{System general complex-1} is
\begin{equation} \label{case b-2 complex}
\begin{aligned}
\dot{z}= & z
+ \left( b_{20} - i \frac{7}{16} a_{20} \right) z^2
- i \frac{3}{8} a_{20} z w
- i \frac{3}{16} a_{20} w^2
+ \left(\frac{b_{20}^2}{4} - i \frac{5}{16} a_{20} b_{20} \right) z^3
\\
& - i \frac{3}{8} a_{20} b_{20} z^2 w
- \left( \frac{b_{20}^2}{4} +  i \frac{5}{16} a_{20} b_{20} \right) z w^2,
\\
\dot{w}= & - w
 - i \frac{3}{16} a_{20} z^2
 - i \frac{3}{8} a_{20} z w
  - \left( b_{20} + i \frac{7}{16} a_{20} \right)  w^2
 + \left( \frac{b_{20}^2}{4} - i \frac{5}{16} a_{20} b_{20} \right)  z^2 w
 \\
& - i \frac{3}{8} a_{20} b_{20} z w^2
 - \left(\frac{b_{20}^2}{4} + i \frac{5}{16} a_{20} b_{20} \right) w^3 .
\end{aligned}
\end{equation}
System \eqref{case b-2 complex} has the following Darboux factors
\begin{equation*}
\begin{aligned}
 l_1 =& z
+ \left( \frac{b_{20}}{2}  - \frac{i}{16} a_{20} \right) z^2
+ \left( \frac{b_{20}}{2} + \frac{i}{8} a_{20} \right) z w
-  \frac{i}{16} a_{20} w^2,
 \\
 l_2 =& w
 + \frac{i}{16} a_{20} z^2
 + \left( \frac{b_{20}}{2} - \frac{i}{8} a_{20} \right) z w
 + \left( \frac{b_{20}}{2} + \frac{i}{16} a_{20} \right) w^2,
 \\
l_3 =&   1 + \frac{b_{20}}{2} z + \frac{b_{20}}{2} w,
\\
l_4 =&1 - \frac{i}{4} \left( a_{20} + i 2  b_{20} \right) z
 + \frac{i}{4} \left( a_{20} - i 2 b_{20} \right) w,
 \end{aligned}
\end{equation*}
which when $a_{20} \neq 0$ allow to construct the Darboux linearization
 \begin{equation}
  \label{AC3}
 z_1 =  l_1 l_3^{\alpha_1} l_4^{\alpha_2},
\hspace{0.3cm}
 w_1 =   l_2 l_3^{\beta_1} l_4^{\beta_2},
 \end{equation}
 where
 \begin{equation*}
 \begin{aligned}
\alpha_1 =& \frac{i 2  b_{20}}{a_{20}},
\hspace{0.3cm}
\alpha_2 = - \frac{2 a_{20} + i 2 b_{20}}{a_{20}},
\\
 \beta_1 =& - \frac{i 2 b_{20}}{a_{20}},
 \hspace{0.3cm}
\beta_2 = - \frac{2 a_{20} - i 2 b_{20}}{a_{20}}.
 \end{aligned}
\end{equation*}

 If $a_{20} = 0$,  case (3) is equivalent to case (2).
Thus system \eqref{case b-2 complex} is linearizable.

\medskip

$Case \hspace{0.1cm} (4):$
In this case system \eqref{system 1} becomes
\begin{equation} \label{case b-4 real}
\begin{aligned}
\dot{x}= &  - y + a_{20} x^2  - 2 b_{20} x y + a_{20} b_{20} x^3  + r_{11} x^2 y,
\\
\dot{y}= & x + b_{20} x^2 + a_{20} x y  - b_{20} y^2 + a_{20} b_{20} x^2 y + r_{11} x y^2,
\end{aligned}
\end{equation}
and after the substitution \eqref{zw} we obtain the system
\begin{equation} \label{case b-4 complex}
\begin{aligned}
\dot{z}= &  z
+ \left( b_{20} - i/2 a_{20} \right)  z^2
- \frac{i}{2} a_{20} zw
- \left(\frac{ r_{11} }{4} + \frac{i}{4} a_{20} b_{20} \right) z^3
- \frac{i}{2} a_{20} b_{20} z^2 w
\\
& ~~+ \left(\frac{r_{11}}{4} - \frac{i}{4} a_{20} b_{20} \right) zw^2,
\\
\dot{w}= & -w
- \frac{i}{2} a_{20} zw
- \left( b_{20} + \frac{i}{2} a_{20} \right) w^2
- \left(\frac{ r_{11}}{4} + \frac{i}{4} a_{20} b_{20} \right) z^2w
- \frac{i}{2} a_{20} b_{20} z w^2
\\
&~~+ \left(\frac{r_{11}}{4}  - \frac{i}{4} a_{20} b_{20} \right) w^3,
\end{aligned}
\end{equation}
which admits the Darboux factors
\begin{equation*}
\begin{aligned}
 l_1 =& z, \hspace{0.3cm} l_2 = w,
 \\
 l_3 =&  1 + \frac{1}{4} \left( - i a_{20} + 2 b_{20} + i C \right) z
 - \frac{i}{4} \left( -a_{20} + i 2  b_{20} + C \right) w,
 \\
l_4 =& 1 - \frac{i}{2} \left( a_{20} + i 2 b_{20} \right) z
+  \frac{i}{2} \left( a_{20} - i 2 i b_{20} \right) w
- \frac{i}{4} \left( a_{20} b_{20} - i r_{11} \right) z^2
\\
& + \frac{1}{2} \left( 2 b_{20}^2 + r_{11} \right) z w
+  \frac{i}{4} \left( a_{20} b_{20} + i r_{11} \right) w^2,
 \end{aligned}
\end{equation*}
where $C = \sqrt{a_{20}^2 - 4 b_{20}^2 - 4 r_{11}}$.
When $C \neq 0$ we obtain the Darboux linearization
 \begin{equation}
 \label{AC4}
 z_1 =  l_1 l_3^{\alpha_1} l_4^{\alpha_2},
\hspace{0.3cm}
 w_1 =   l_2 l_3^{\beta_1} l_4^{\beta_2},
 \end{equation}
where
 \begin{equation*}
 \begin{aligned}
\alpha_1 =& \frac{a_{20} + i 2  b_{20}}{C},
\hspace{0.3cm}
\alpha_2 = - \frac{a_{20} + i 2 b_{20} + C}{ 2 C},
\\
 \beta_1 =& \frac{a_{20} - i 2 b_{20}}{C},
 \hspace{0.3cm}
\beta_2 = - \frac{a_{20} - i 2 b_{20} + C}{ 2 C}.
 \end{aligned}
\end{equation*}

Using the same argument as in case (2) we conclude that the system is linearizable also when $C=0$.

\medskip

$Case \hspace{0.1cm} (5):$  If $b_{20} \ne 0$, 
we can rewrite the condition as
\begin{equation*}
\begin{aligned}
&r_{11} = 3 a_{02}^2 + 2 a_{20} a_{02} + \frac{a_{20}^2}{3} +  \frac{4b_{20}^2}{3},
~~r_{02} =  \frac{27 a_{02}^3 + 9 a_{02}^2 a_{20} - 3 a_{02} a_{20}^2 - a_{20}^3 - 16 a_{20} b_{20}^2}{36 b_{20}},
\\
&  a_{11} = - \frac{9 a_{02}^2 - a_{20}^2 - 4 b_{20}^2}{6b_{20}}, ~r_{20} = a_{02} b_{20} + a_{20} b_{20},
~b_{11} = a_{20} + 3 a_{02},  ~a_{20} = 3 a_{02} \pm 4 b_{20} i.
\end{aligned}
\end{equation*}
We only consider the case $ a_{20} = 3 a_{02} + 4 b_{20} i $, 
since when  $a_{20} = 3 a_{02} - 4 b_{20} i$, the consideration is analogous. Under this condition after the substitution \eqref{zw} system \eqref{system 1} becomes
\begin{equation} \label{case-v1}
\begin{aligned}
\dot{z}= &   z + (3 b_{20}- 3 i a_{02}) z^2  + (2 b_{20}- 2 i a_{02}) z w+ 2 i a_{02} w^2  +(2 b_{20}^2- 2 a_{02}^2  - 4 i a_{02} b_{20} ) z^3
\\
& - (2 a_{02}^2 + 4 i a_{02} b_{20}- 2 b_{20}^2) z^2 w
+ (4 a_{02}^2   + 4 i a_{02} b_{20}) z w^2,
\\
\dot{w}= &  -w \Big(1 + (2 i a_{02}- 2 b_{20}) z + (i a_{02}  - b_{20}) w + (2 a_{02}^2+ 4 i a_{02} b_{20}- 2 b_{20}^2) z^2
\\
& + (2 a_{02}^2 + 4 i a_{02} b_{20}-2 b_{20}^2) z w - (4 a_{02}^2 + 4 i a_{02} b_{20}) w^2 \Big).
\end{aligned}
\end{equation}

System \eqref{case b-1 complex} has the  Darboux factors
\begin{equation*}
\begin{aligned}
 l_1 =&  ~z - i (a_{02} + i b_{20}) z^2 +\frac{ 2 i a_{02}}{3} w^2,
 \\
 l_2 =&   ~w,
 \\
l_3 =&   ~  1 - 2 i (a_{02} + i b_{20}) z +
  i (a_{02} + i b_{20}) w,
\\
l_4 =&  ~
1 - 4 i (a_{02} + i b_{20}) z - 4 (a_{02} + i b_{20})^2 z^2 +
  2 i (a_{02} + i b_{20}) w +
  4 (a_{02} + i b_{20})^2 z w
  \\
  &  + (-a_{02}^2 - 2 i a_{02} b_{20} +
     b_{20}^2) w^2, \end{aligned}
\end{equation*}
which allow to construct the Darboux linearization
 \begin{equation}
 \label{AC211}
 z_1 =  l_1 l_4^{-1},
\hspace{0.3cm}
 w_1 =   l_2 l_4^{-\frac{1}{2}}.
 \end{equation}

Similarly as above, using the Zariski closure argument we conclude that the system is linearizable also when $b_{20}=0$.
\end{proof}

\bigskip


\section{Relation between isochronicity conditions of \cite{C-G-G} and Theorem \ref{Theorem 1}} \label{Sec:Relation}

In \cite{C-G-G} the authors presented conditions for isochronicity of system \eqref{system 1} when all parameters of the system are real.
We investigate the relation between their  conditions and the conditions presented in  Theorem \ref{Theorem 1} and in \cite{Llo1997}.
The following result is obtained in \cite{C-G-G}.

\begin{theorem} [Theorem 1 of \cite{C-G-G}]
\label{th-e}
The origin of system \eqref{system 1} is an isochronous center if and only if \eqref{system 1} can be transformed in one of the following
forms in polar coordinates:
\begin{enumerate}
	\item[(a)]
$
\begin{cases}
 \dot{r} = r^2 (\cos3\theta-\frac{7}{3}\cos\theta-k_1 \sin\theta)
+r^3(-\frac{2k_1}{3}-\frac{2k_1}{3} \cos2\theta-\frac{k_1^2}{2} \sin2\theta),
\\
\dot{\theta} = 1+r (-\sin3\theta+k_1\cos \theta- \sin\theta),
\end{cases}
$
	\item[(b)]
$
\begin{cases}
 \dot{r} = r^2 (\cos3\theta+\frac{13}{3}\cos\theta-k_1 \sin\theta)
+r^3(2k_1+\frac{10k_1}{3} \cos2\theta-\frac{k_1^2}{2} \sin2\theta),
\\
\dot{\theta} = 1+r (-\sin3\theta+k_1\cos \theta+ \frac{1}{3}\sin\theta),
\end{cases}
$
\item[(c)]
$
\begin{cases}
 \dot{r} = r^2 k_1\cos\theta
+r^3( k_2 \cos2\theta+k_3 \sin2\theta),
\\
\dot{\theta} = 1+r k_1\cos \theta,
\end{cases}
$
\item[(d)]
$
\begin{cases}
 \dot{r} = r^2 (k_1\cos\theta+k_2 \sin\theta)
+r^3(\frac{k_1k_2}{2}-\frac{k_1k_2}{2} \cos2\theta+k_3 \sin2\theta),
\\
\dot{\theta} = 1+r k_1 \sin\theta
\end{cases}
$
 and
\item[(e)]
$
\begin{cases}
 \dot{r} = r^2 (k_1\cos\theta+k_2 \sin\theta)
+r^3(k_3+k_4 \cos2\theta+k_5 \sin2\theta),
\\
\dot{\theta} = 1,
\end{cases}
$
\end{enumerate}
where $k_j$'s in each system are independent and are functions of original parameters in system \eqref{system 1}.
\end{theorem}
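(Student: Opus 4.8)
The plan is to relate Theorem~\ref{th-e}, which is due to \cite{C-G-G} and is proved there by passing to polar coordinates, back to Theorem~\ref{Theorem 1}. By the Poincar\'e--Lyapunov theorem recalled in Section~\ref{Sec:Prel}, the origin of the real system \eqref{system 1} is an isochronous center if and only if the system is linearizable; and, restricting to real parameters and keeping the normalization $b_{02}=-b_{20}$, the linearizable systems are precisely those whose coefficients lie in the real locus of one of the varieties $\vv(J_1),\dots,\vv(J_5)$ constructed in the proof of Theorem~\ref{Theorem 1}. So it suffices to show that this real locus coincides, component by component, with the union of the families obtained by writing the normal forms (a)--(e) back in the coordinates of \eqref{system 1}. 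I would establish this equality by the two standard inclusions.

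For the straightforward inclusion I would take each of the five normal forms (a)--(e), substitute $x=r\cos\theta$, $y=r\sin\theta$, and rewrite $\dot r,\dot\theta$ as a polynomial vector field in $(\dot x,\dot y)$; one checks directly that it has the shape \eqref{system 1}, reads off $a_{ij},b_{ij},r_{ij}$ as explicit polynomials in the free parameters $k_1,\dots,k_5$, applies the rotation used to normalize $b_{02}=-b_{20}$, and verifies membership in one of $\vv(J_1),\dots,\vv(J_5)$ by substituting into the defining polynomials. Combined with the linearizations exhibited case by case in the proof of Theorem~\ref{Theorem 1}, this shows each of the forms (a)--(e) is isochronous. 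For the reverse inclusion I would start from an arbitrary real point of $\vv(J_k)$ --- equivalently, from conditions (2)--(4) and \eqref{cond1*}, since over $\R$ the genuinely complex conditions (1) and (5) have real loci contained in the union of these --- choose a rotation angle $\varphi$ so that, after rotating and passing to polar coordinates, every coefficient absent from the intended normal form is annihilated, and then read off the resulting $k_j$ as functions of the original parameters. Keeping track of which component yields which form produces the dictionary between the two classifications and, in particular, reproves Theorem~\ref{th-e}.

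The step I expect to be the main obstacle is this reverse inclusion. One must perform the irreducible decomposition of the isochronicity variety \emph{over} $\R$ rather than over $\C$: complex components may become reducible or lose their real points, so the four real components of \cite{Llo1997} need not match the five complex linearizability conditions, and one of the real components must split into two polar normal forms because of the residual rotational freedom in \eqref{system 1}. Determining the correct angle $\varphi$ for each component and expressing the $k_j$ explicitly in terms of $a_{ij},b_{ij},r_{ij}$ is a delicate trigonometric and algebraic computation that in practice has to be carried out with a computer algebra system; keeping this bookkeeping consistent across all components --- and confirming that the complex conditions (1) and (5) genuinely add nothing over $\R$ --- is where the real work lies.
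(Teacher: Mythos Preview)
Your proposal has a fundamental problem: the paper does not prove Theorem~\ref{th-e} at all. It is quoted verbatim from \cite{C-G-G} as background, and one of the main points of Section~\ref{Sec:Relation} is that the theorem, as stated, is \emph{false}. Immediately after Theorem~\ref{th-e2} the paper computes the first two Lyapunov quantities for the Cartesian form of system~(e) and finds $\eta_1=k_3$, $\eta_2=2k_1k_2k_5+k_4(k_1^2-k_2^2)$; unless both vanish, the origin of~(e) is a focus, not a center. So the ``if'' direction you propose to prove --- that each of (a)--(e) is isochronous --- is simply wrong for (e), and your verification that (e) lands in one of the varieties $\vv(J_1),\dots,\vv(J_5)$ would fail.

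Your reverse inclusion also cannot work as stated. You expect the four real conditions \eqref{cond1*}, (2), (3), (4) to account for all five polar forms, with one real component splitting into two. What actually happens (Theorem~\ref{th-e2}) is a four-to-four correspondence: \eqref{cond1*}$\leftrightarrow$(c), (2)$\leftrightarrow$(a), (3)$\leftrightarrow$(b), (4)$\leftrightarrow$(d), each via an explicit linear change of coordinates and time rescaling followed by passage to polar form. Form~(e) is the leftover; once one imposes $k_3=\eta_2=0$ it becomes a subcase of (4). So rather than reproving Theorem~\ref{th-e}, the paper corrects it, and the right target for a proof is Theorem~\ref{th-e2} together with the focus computation for~(e).
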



As it is mentioned in the previous section by the result of \cite{Llo1997}  real system \eqref{system 1}
is linearizable (equivalently, it has isochronous center)
if and only if  condition \eqref{cond1*} or one of conditions (2)-(4) of
 Theorem  \ref{Theorem 1} hold.
The following theorem gives the relation of the results  of \cite{Llo1997} (and Theorem \ref{Theorem 1})
and \cite{C-G-G}.

\begin{theorem}
\label{th-e2}
System \eqref{system 1}  under conditions \eqref{cond1*}, $(2)$, $(3)$, and $(4)$ of Theorem \ref{Theorem 1}  can be changed
into system  (c), (a), (b) and (d)  of  Theorem \ref{th-e},  respectively.
\end{theorem}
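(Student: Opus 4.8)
The plan is a case-by-case passage to polar coordinates. Fix one of the conditions \eqref{cond1*}, $(2)$, $(3)$, $(4)$, substitute the corresponding parameter relations (together with $b_{02}=-b_{20}$) into \eqref{system 1}, and set $x=r\cos\theta$, $y=r\sin\theta$. Using the identities $x\dot y-y\dot x=r^{2}+(xq_2-yp_2)$ and $x\dot x+y\dot y=(xp_2+yq_2)+r^{2}r_2$ one obtains at once
\[
\dot\theta=1+\frac{xq_2-yp_2}{r^{2}}=1+r\,\rho(\theta),\qquad
\dot r=\frac{xp_2+yq_2}{r}+r\,r_2=r^{2}\sigma(\theta)+r^{3}\tau(\theta),
\]
where $\rho,\sigma$ are combinations of the first and third harmonics of $\theta$ and $\tau$ is a combination of $1,\cos2\theta,\sin2\theta$, all with coefficients that are explicit polynomials in the surviving free parameters of the given condition. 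One then applies a normalizing transformation — a rotation $\theta\mapsto\theta+\theta_0$, a scaling $(x,y)\mapsto(\mu x,\mu y)$ (i.e.\ $r\mapsto\mu r$), and, when the target has the opposite orientation, the reflection $(x,y)\mapsto(y,x)$ with $t\mapsto-t$ already used in Section~\ref{Sec:Results} — chosen so that the polar system acquires exactly the prescribed shape, and reads off the functions $k_j$ in terms of the original coefficients.

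For condition $(2)$ the surviving parameters are $a_{20},b_{20}$ and one computes $\rho(\theta)=b_{20}\cos\theta+\tfrac{3a_{20}}{4}(\sin\theta+\sin3\theta)$; the scaling $\mu=-\tfrac{3a_{20}}{4}$ makes the coefficient of $\sin3\theta$ equal to $-1$, and then $\dot\theta,\dot r$ take precisely form $(a)$ with $k_1=-\tfrac{4b_{20}}{3a_{20}}$, the coefficients $\cos3\theta-\tfrac{7}{3}\cos\theta-k_1\sin\theta$ and $-\tfrac{2k_1}{3}-\tfrac{2k_1}{3}\cos2\theta-\tfrac{k_1^{2}}{2}\sin2\theta$ in $\dot r$ coming out automatically. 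Conditions $(3)$ and $(4)$ are handled identically, the explicit systems \eqref{case b-2 real} and \eqref{case b-4 real} from the proof of Theorem~\ref{Theorem 1} being a convenient starting point; after passing to polar coordinates, rescaling $r$, and (for $(4)$) a rotation, they become forms $(b)$ and $(d)$, with $k_1,k_2,k_3$ expressed through $a_{20},b_{20},r_{11}$. Condition \eqref{cond1*} is different: here $xq_2-yp_2=r^{2}(b_{20}x+a_{20}y)$ and $xp_2+yq_2=r^{2}(a_{20}x-b_{20}y)$, so $\rho$ and $\sigma$ are pure first harmonics which are orthogonal over $\R$; hence no rotation and scaling can align the quadratic part of $\dot r$ with that of $\dot\theta$ as form $(c)$ requires, and a further normalizing change (of the radial variable) must be used before one can read off $k_1,k_2,k_3$ in terms of $a_{20},b_{20},r_{20},r_{11}$.

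The trigonometry involved is routine, so the substance of the proof is twofold. First, one must identify the correct normalizing transformation in each case; for $(2)$--$(4)$ this is essentially dictated by matching the $\sin3\theta$ or $\cos3\theta$ coefficient, but for \eqref{cond1*} a pure linear change of $(x,y)$ does not suffice and the required transformation is subtler. Second, once it is fixed, one must verify that the conditions $(2)$, $(3)$, $(4)$, \eqref{cond1*} force exactly the \emph{resonant} relations among the harmonics appearing in Theorem~\ref{th-e} — the coefficient of $\cos\theta$ tied to that of $\cos3\theta$ in $\dot r$, the cubic part of $\dot r$ determined by its quadratic part, and so on. This coefficient bookkeeping, done by hand or with a computer algebra system, is the only delicate point; because the conditions are already known (via \cite{Llo1997} and Theorem~\ref{Theorem 1}) to define isochronous centers, some normal form from Theorem~\ref{th-e} is guaranteed to appear, so all that remains is to pin down which one. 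The independence of the $k_j$ is not at issue here, being part of the statement quoted from \cite{C-G-G}.
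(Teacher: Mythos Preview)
Your approach --- substitute each condition into \eqref{system 1}, pass to polar coordinates, then normalize by a rotation, scaling, and (if needed) a reflection with time reversal --- is exactly the paper's method; the only cosmetic difference is that the paper performs the linear change in Cartesian coordinates \emph{before} going to polar, while you go to polar first and then normalize.  For conditions $(2)$, $(3)$, $(4)$ your sketch is correct, though it remains a sketch: the paper writes down the explicit changes $x=\frac{4}{3a_{20}}\tilde x,\ y=-\frac{4}{3a_{20}}\tilde y,\ dt=-d\tilde t$ (case $(2)$), $x=\frac{16}{3a_{20}}\tilde x,\ y=\frac{16}{3a_{20}}\tilde y$ (case $(3)$), and $x=\tilde y,\ y=\tilde x,\ dt=-d\tilde t$ (case $(4)$), and records the resulting $k_j$, so that the ``resonant relations among the harmonics'' are verified rather than asserted.

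The real gap is your handling of condition \eqref{cond1*}.  Your orthogonality observation ($\rho=b_{20}\cos\theta+a_{20}\sin\theta$, $\sigma=a_{20}\cos\theta-b_{20}\sin\theta$) is correct, but the conclusion you draw --- that ``a further normalizing change (of the radial variable) must be used'' --- is not.  The paper uses \emph{only} a linear similarity here as well: a scaled reflection
\[
x=\tfrac{1}{a_{20}^2+b_{20}^2}\bigl(-a_{20}\tilde x+b_{20}\tilde y\bigr),\qquad
y=\tfrac{1}{a_{20}^2+b_{20}^2}\bigl(b_{20}\tilde x+a_{20}\tilde y\bigr),\qquad dt=-d\tilde t,
\]
whose matrix satisfies $M^{\mathsf T}M=\frac{1}{a_{20}^2+b_{20}^2}I$, so it is precisely a reflection times a uniform scaling.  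This takes the system to \eqref{3-e}, and in polar one gets $\dot r=r^{2}\cos\theta+r^{3}(k_2\cos2\theta+k_3\sin2\theta)$, $\dot\theta=1+r\sin\theta$, which the paper identifies with form~(c).  So the quadratic parts \emph{remain} orthogonal after the paper's reduction; the orthogonality is a feature of case~(c), not an obstruction to reaching it (form~(c) should be read up to a further rotation, or there is a $\cos\theta$/$\sin\theta$ misprint in its transcription from \cite{C-G-G}).  In any event the normalization stays linear, and no radial change of variable is required.
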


\begin{proof}
 System \eqref{system 1} under condition \eqref{cond1*}  becomes
 \begin{equation} \label{sys1-1}
\begin{aligned}
\dot{x} &= - y + a_{20} x^2 - 2b_{20} x y - a_{20} y^2   +   x(r_{20} x^2 + r_{11} x y -r_{20} y^2)   = P_1(x,y),
\\
\dot{y} &= x +   b_{20} x^2 + 2a_{20} x y   -b_{20} y^2 + y(r_{20} x^2 + r_{11} x y -r_{20} y^2)   = Q_1(x,y).
  \end{aligned}
\end{equation}

Applying the linear transformation
\begin{equation*}
x= -\frac{ a_{20}}{a_{20}^2 + b_{20}^2}  \tilde{x}+  \frac{b_{20}}{a_{20}^2+ b_{20}^2}  \tilde{y} ,
~~ y= \frac{b_{20}}{a_{20}^2+  b_{20}^2} \tilde{x}+  \frac{a_{20}}{a_{20}^2+ b_{20}^2}  \tilde{y},
\end{equation*}
and a time scaling $dt= - d\tilde{t}$,
we change system  \eqref{sys1-1} to
\begin{equation}
\label{3-e}
\begin{aligned}
 \dot{x} & = -y + x^2 - y^2 + x \left(k_2 x^2 + 2 k_3 xy -  k_2 y^2 \right),
\\
\dot{y}& =  x +2 xy  +y \left(k_2 x^2 + 2 k_3 xy -k_2 y^2 \right),
\end{aligned}
\end{equation}
where
$k_2 = \frac{a_{20} b_{20} r_{11} + a_{20}^2 r_{20} + b_{20}^2 r_{20}}{\left( a_{20}^2 + b_{20}^2 \right)^2}$,
$k_3 = \frac{a_{20}^2 r_{11} - b_{20}^2 r_{11}  + 4 a_{20} b_{20} r_{20}}{2 \left(a_{20}^2 + b_{20}^2 \right)^2}$,
and below we  write $x$ and $y$ instead of $\tilde{x}$ and $\tilde{y}$.
System \eqref{3-e} in polar coordinates $x = r \cos \theta$, $y = r \sin \theta$ becomes system (c).

System \eqref{system 1} under condition (2) becomes system \eqref{case b-1 real}.
The transformation
$
x= \frac{4}{3 a_{20}}  \tilde{x} ,
~~ y= -\frac{4}{3 a_{20}}  \tilde{y}
$
and the time scaling $dt=-d\tilde{t}$ change system \eqref{case b-1 real} to
\begin{equation} \label{sys case1 trans}
\begin{aligned}
\dot{x} = & -y - \frac{4}{3} x^2 - 2 k_1 x y - \frac{x}{3} \left(4 k_1 x^2 + 3 k_1^2 x y \right),
\\
\dot{y} = & x + k_1 x^2 - \dfrac{16}{3} x y - k_1 y^2 - \frac{y}{3} \left(4 k_1 x^2 + 3 k_1^2 x y \right),
\end{aligned}
\end{equation}
where we  write $x$ and $y$ instead of $\tilde{x}$ and $\tilde{y}$,  and $k_1 = \frac{4 b_{20}}{3 a_{20}}$.
System \eqref{sys case1 trans} in polar coordinates $x = r \cos \theta$, $y = r \sin \theta$ becomes system (a).

System \eqref{system 1} under condition (3) becomes system \eqref{case b-2 real}.
Applying the transformation
$
x= \frac{16}{3 a_{20}}  \tilde{x} ,
~~ y= \frac{16}{3 a_{20}}  \tilde{y},
$
we transform  \eqref{case b-2 real}  to the system
\begin{equation} \label{sys case2 trans}
\begin{aligned}
\dot{x} = & -y - \frac{16}{3} x^2 - 2 k_1 x y - \frac{4}{3} y^2 + \frac{k_1}{3} x \left(16 x^2 - 3 k_1 x y - 4 y^2 \right),
\\
\dot{y} = & x + k_1 x^2 + \dfrac{8}{3} x y - k_1 y^2 + \frac{k_1}{3} y \left(16 x^2 - 3 k_1 x y - 4 y^2 \right),
\end{aligned}
\end{equation}
where we  write $x$ and $y$ instead of $\tilde{x}$ and $\tilde{y}$, and $k_1 = \frac{16 b_{20}}{3 a_{20}}$.
System \eqref{sys case2 trans} in polar coordinates $x = r \cos \theta$, $y = r \sin \theta$ becomes system (b).

 System \eqref{system 1} under condition (4) becomes system \eqref{case b-4 real}.
The transformation
$x=  \tilde{y}$, $y= \tilde{x}$  and a time scaling $dt=-d\tilde{t}$ change
system \eqref{case b-4 real} to
\begin{equation} \label{sys case5 trans}
\begin{aligned}
\dot{x} = & -y + k_1 x^2 +k_2 x y -k_1 y^2 + x \left( 2 k_3 x y +   k_1 k_2  y^2 \right),
\\
\dot{y} = & x + 2 k_1 x y + k_2 y^2 + y \left(  2 k_3 x y +  k_1 k_2  y^2 \right),
\end{aligned}
\end{equation}
where $k_1 = b_{20}$, $k_2 = -a_{20}$, $k_3 = -\frac{r_{11} }{2}$, and  we write $x$ and $y$ instead of $\tilde{x}$ and $\tilde{y}$.
System \eqref{sys case5 trans} in polar coordinates $x = r \cos \theta$, $y = r \sin \theta$ becomes system (d).
 \end{proof}

However  system  (e) from Theorem \ref{th-e} does not
have an isochronous center at the origin, since, generally speaking,  the
origin of the system is not a center, but a focus.
Indeed, system (e) can be written in the   Cartesian coordinates
 $x=r\cos(\theta), y=r\sin(\theta)$ as
\begin{equation}
\label{e2}
\begin{array}{l}
 \dot{x} = -y+k_1 x^2+k_2 x y+ x \left((k_3+k_4)x^2+(k_3-k_4)y^2+2 k_5xy \right),
\\
\dot{y} = x+k_1 xy+k_2 y^2+ y \left((k_3+k_4)x^2+(k_3-k_4)y^2+2 k_5xy \right).
\end{array}
\end{equation}
We computed the first two Lyapunov quantities for system \eqref{e2}
and  obtained    $\eta_1 =k_3 $ and $\eta_2 = 2k_1k_2k_5+k_4(k_1^2-k_2^2)$.
Thus,  the origin of system (e) is a focus, which is stable if
$k_3 < 0$ or $k_3 = 0, \eta_2<0$, and unstable if $k_3 > 0$ or $k_3 = 0, \eta_2>0$.
So, the necessary condition for existence of a center and a isochronous center at the origin of system (e) is $k_3 =\eta_2= 0$.

When $k_3 =\eta_2= 0$, by the linear transformation $x_1=x+\frac{k_2}{k_1} y$, $y_1=y-\frac{k_2}{k_1} x$, 
system \eqref{e2} is changed into
\begin{equation}
\label{e3}
\begin{array}{l}
 \dot{x} = -y+k_1 x^2-  \frac{k_1k_4}{k_2}x^2y,
\\
\dot{y} = x+k_1 xy- \frac{k_1k_4}{k_2}xy^2.
\end{array}
\end{equation}
System \eqref{e3} is a special case of system \eqref{case b-4 real} when $b_{20}=0$, which is system \eqref{system 1} under condition (4)
after adding the condition $b_{20}=0$. Therefore, when  $k_3 =\eta_2= 0$, the origin of system \eqref{e2}, and thus of system (e),
is an isochronous center.

It appears the authors of \cite{C-G-G} made the following
mistake in their reasoning.
They obtained system (e) from the condition
of vanishing of two period constants (period constants
are an analogue   of linearizability quantities when
computing in polar coordinates).
Then observing that the second equation of the
system is $\dot \theta=1$, they concluded that the system
has an isochronous center at the origin.
However, as we have shown, unless
 $k_3 =\eta_2= 0$, the origin of the system is an isochronous
 focus.


\bigskip


\section{Coexistence of isochronous centers}

In this section we present our study on existence of few isochronous centers  in real system \eqref{system 1}.

%
%

\begin{theorem} \label{th-coexist}
System \eqref{system 1} has at most two isochronous centers including  the origin when all parameters are real.
More precisely, under condition   \eqref{cond1*} and conditions (2), (3) and (4) of Theorem \ref{Theorem 1}, system \eqref{system 1} has at most two, one, two and two isochronous centers, respectively.
\end{theorem}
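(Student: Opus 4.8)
## Proof Proposal for Theorem on Coexistence of Isochronous Centers

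By Theorem~\ref{Theorem 1} and the result of \cite{Llo1997}, the origin of real system \eqref{system 1} is an isochronous center if and only if the parameters satisfy condition \eqref{cond1*} or one of conditions (2)--(4) of Theorem~\ref{Theorem 1}, so it suffices to bound, for each of these four parameter families separately, the total number of isochronous centers of the corresponding cubic system, which has the explicit form \eqref{sys1-1}, \eqref{case b-1 real}, \eqref{case b-2 real} and \eqref{case b-4 real} respectively. For a fixed admissible choice of parameters the plan is: (i) list all finite real equilibria by solving $P(x,y)=Q(x,y)=0$; (ii) compute the Jacobian at each equilibrium and discard those whose linear part is not a center, i.e. for which $\operatorname{tr}DF\neq0$ or $\det DF\le0$, since saddles, nodes and hyperbolic or degenerate foci are never isochronous centers; (iii) for each surviving candidate move it to the origin, rescale time so that the linear part becomes $\dot u=-v,\ \dot v=u$, and decide isochronicity; (iv) count the isochronous centers found.

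Step~(i) is effective because in each family the vector field is structured. In cases (2) and (3) (when $b_{20}\neq0$) the line $1+b_{20}x=0$ divides $\dot x$ and is therefore invariant, while in cases \eqref{cond1*} and (4) one has the angular identity $x\dot y-y\dot x=(x^2+y^2)\bigl(1+\ell(x,y)\bigr)$ with $\ell$ a linear form; in either situation all equilibria other than the origin are confined to a line (with at most one additional isolated equilibrium, $(0,-4/a_{20})$, in case (3)), so they can be listed explicitly, and the degenerate strata $a_{20}=0$, $b_{20}=0$ are handled separately --- in particular case (3) with $a_{20}=0$ collapses to case (2). Since a center cannot lie on an invariant line, in cases (2) and (3) every equilibrium on $1+b_{20}x=0$ is discarded at step~(ii) for free; \eqref{case b-1 real} then has the origin as its only center candidate, which already gives the bound ``at most one'' for case (2). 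For the remaining families a short Jacobian computation leaves at most one off-origin candidate: the point $(0,-4/a_{20})$ in case (3), whose linear part has trace $0$ and determinant $1$; the point $(-1/b_{20},0)$ in case (4), which has trace $a_{20}/b_{20}$ and hence is a candidate only when $a_{20}=0$; and at most one of the two points on the line $\ell=0$ in case \eqref{cond1*}.

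In step~(iii) we translate each surviving candidate to the origin, normalise the linear part, and compute the first few isochronicity quantities of the resulting cubic system with a computer algebra system; a nonvanishing quantity discards the point, while persistent vanishing is upgraded to a proof of isochronicity either by exhibiting a Darboux linearization, as in the proof of Theorem~\ref{Theorem 1}, or by identifying the translated system, up to an affine change and a time rescaling, with one of the already established isochronous families --- for instance the extra isochronous center in case (4) reduces to system \eqref{case b-4 real} with $b_{20}=0$, exactly as system (e) of Section~\ref{Sec:Relation} reduces to that system. Adding up the certified centers yields the claimed counts $2$, $1$, $2$, $2$ under \eqref{cond1*}, (2), (3), (4), and hence the global bound of two, since an isochronous configuration of \eqref{system 1} whose origin is an isochronous center belongs to one of these four families.

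The main difficulty lies not in any individual computation but in the completeness and robustness of the case analysis: we have to control how the number and the topological type of the finite real equilibria vary as the parameters sweep each family --- in particular across the degenerate strata $a_{20}=0$, $b_{20}=0$, and the loci where a candidate equilibrium becomes non-hyperbolic or collides with another --- and verify that on every such stratum the number of isochronous centers can only drop, never exceed the stated bound. This bookkeeping is carried out with resultant and discriminant computations over the parameter space, and the isochronicity certifications with the Darboux-linearization machinery of Section~\ref{Sec:Prel}; both are finite but laborious, and the degenerate strata require the most care.
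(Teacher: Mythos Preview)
Your plan follows the same architecture as the paper's proof: for each of the four families, list the finite equilibria, discard those whose Jacobian is not of center type, and then certify isochronicity of the survivors. The paper implements the first two steps by computing Gr\"obner bases of $\langle P,Q,\operatorname{tr}DF\rangle$ case by case, whereas you organize them around two structural observations --- the factor $1+b_{20}x$ of $\dot x$ in cases (2) and (3), combined with the fact that a center cannot lie on an invariant line, and the angular identity $x\dot y-y\dot x=(x^{2}+y^{2})(1+\ell(x,y))$ in cases \eqref{cond1*} and (4), which forces every non-origin equilibrium onto the line $1+\ell=0$. This is a cleaner, more conceptual route and in fact explains \emph{why} the paper's Gr\"obner bases always produce that linear form as a generator. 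For the isochronicity certifications the paper sometimes argues by symmetry (in case (4), after translating to the midpoint $(-1/(2b_{20}),0)$ the system becomes reversible under $y\mapsto -y$), where you propose identifying the shifted system with an already-treated family; both are valid. One step you leave as an assertion is that in case \eqref{cond1*} at most one of the two candidates on $\ell=0$ has positive Jacobian determinant: the paper handles this by the explicit computation $\det DF(C_{+})\cdot\det DF(C_{-})<0$, and you should include that argument rather than fold it into ``a short Jacobian computation''.
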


\begin{proof}

We first consider the simplest case, case (2) of Theorem \ref{Theorem 1}.
In this situation, system \eqref{system 1} has the form \eqref{case b-1 real}.
From the first equation of \eqref{case b-1 real}, we know that the coordinates of equilibria must satisfy
$b_{20} x+1=0$ or $a_{20} x^2-b_{20} x y-y=0$. Substituting $y=a_{20} x^2/(1+b_{20} x)$ into the right hand of the second equation of \eqref{case b-1 real}
we obtain $ 4 a_{20}^2 x^2+(b_{20}x+1)^2 =0$. Then, we get $x=0$ or $x=-1/b_{20}$.
On the other hand, substituting $x=-1/b_{20}$ into the right hand of the second equation of \eqref{case b-1 real},
we have $-3 a_{20} y/b_{20}=0$. Thus,  other than the origin $O:(0,0)$ we get the equilibrium $A: (-1/b_{20}, 0)$ when $a_{20} b_{20} \ne 0$,
no equilibria exist when $b_{20}=0$ and $a_{20} \ne 0$, or the line $x=-1/b_{20}$
is filled by equilibria when $a_{20} = 0$ and $b_{20} \ne 0$.

Computing the determinant of linear matrix for system \eqref{case b-1 real}  at the  equilibrium $A: (-1/b_{20}, 0)$,
we find that it is equal to $ -3 a_{20}^2/b_{20}^2<0$,
indicating that the equilibrium $A$ is a saddle if it exists. Clearly,  any equilibrium on  the line $x=-1/b_{20}$
cannot be an  isochronous center  when $a_{20} = 0$. Therefore, in the case (2) of Theorem \ref{Theorem 1},
system \eqref{system 1} has only one isochronous center at the origin.

\medskip

Consider now case (3) of Theorem \ref{Theorem 1}.
 In this case system \eqref{system 1} can be written as
 \begin{equation} \label{sys3-1}
\begin{aligned}
\dot{x}= &    (b_{20} x+1) (4 b_{11} x^2-b_{11} y^2-2 b_{20} x y-2 y)/2:=P_3(x,y),
\\
\dot{y}= &  x+b_{20} x^2+b_{11} x y-b_{20} y^2- (b_{11} b_{20}/2 ) y^3-b_{20}^2 x y^2+2 y b_{11} b_{20} x^2:=Q_3(x,y).
\end{aligned}
\end{equation}
From the first equation of \eqref{sys3-1} we see that the coordinates of equilibria must satisfy
$b_{20} x+1=0$ or $g_3(x,y):=4 b_{11} x^2-b_{11} y^2-2 b_{20} x y-2 y=0$.
Substituting $x=-1/b_{20}$ into the right hand side of the second equation of \eqref{sys3-1},
we have $- y b_{11} (b_{20}^2 y^2-2)/b_{20}=0$. Thus, we  find three  equilibria $A: (-1/b_{20}, 0)$
and $A_{\pm}: (-1/b_{20},  \pm  \sqrt{2}/b_{20} )$ if $b_{20} \ne 0$.

If we solve $g_3(x,y)=0$ and substitute the solution into the right hand side of the second equation of \eqref{sys3-1}
a very complicated expression arises. However, we only need to find the coordinates of centers of system \eqref{sys3-1}
and  for a singular point of the center type at  the trace of linear matrix is zero.
We calculate
 \begin{equation} \label{3TD}
\begin{aligned}
T_3(x,y):=  &     \frac{\partial P_3}{\partial x} +  \frac{\partial Q_3}{\partial y}
\\
  =&   b_{20} (4 b_{11} x^2-b_{11} y^2-2 b_{20} x y-2 y)/2
  + (b_{20} x+1) (8 b_{11} x-2 b_{20} y)/2
  \\& ~~+b_{11} x-2 b_{20} y-(3/2) b_{11} b_{20} y^2-2 b_{20}^2 x y+2 b_{11} b_{20} x^2,
\\
D_3(x,y):=  &     \frac{\partial P_3}{\partial x}   \frac{\partial Q_3}{\partial y}-\frac{\partial P_3}{\partial y}   \frac{\partial Q_3}{\partial x}
\\
= & ( b_{20} (4 b_{11} x^2-b_{11} y^2-2 b_{20} x y-2 y)/2+ (b_{20} x+1)) (8 b_{11} x-2 b_{20} y)/2)
 \\
 &~~ (b_{11} x-2 b_{20} y-(3/2) b_{11} b_{20} y^2-2 b_{20}^2 x y+2 b_{11} b_{20} x^2)
 \\
 & ~~-  (b_{20} x+1)  (-2 b_{11} y-2 b_{20} x-2) (4 b_{11} b_{20} x y-b_{20}^2 y^2+b_{11} y+2 b_{20} x+1)/2.
\end{aligned}
\end{equation}
Computing a Groebner basis of the ideal  $\langle g_3, Q_3, T_3 \rangle$
 we got  the basis
\[
\mathcal{G}_{3} := \{ b_{20} x^2+x, b_{11} y^2+2 b_{20} x y+2 y, b_{11} x  \}.
\]
When $b_{11}=0$ we obtain the  equilibrium
$O:(0,0)$  or the line $b_{20} x+1$ is filled with equilibria. When $b_{11}\ne0$, we obtain the equilibrium $B:(0, -2/b_{11})$.

Notice that all equilibria on the line $b_{20} x+1$ are degenerate when $b_{11}=0$, because the determinant of linear matrix
at each equilibrium is zero. Thus, an  equilibrium on the line $b_{20} x+1$ cannot be  isochronous centers when $b_{11}=0$.
By calculations, among all equilibria $A: (-1/b_{20}, 0)$, $A_{\pm}: (-1/b_{20},  \pm  \sqrt{2}/b_{20} )$  and $B:(0, -2/b_{11})$,
only at $B$ the trace  of linear part is zero and the determinant of linear part is positive at the same time.
So we only need check the isochronicity of equilibrium $B:(0, -2/b_{11})$.
Moving the equilibrium $B$ to the origin and making the change
\[
u=   \sqrt{2} (-2 b_{20}/b_{11})) x-\sqrt{2} y,~~~~v = \sqrt{2} x
\]
together with the time scaling $dt=-d\tau$,  we obtain from \eqref{sys3-1} the system
\begin{equation} \label{sys3-2}
\begin{aligned}
\dot{x}= &     -y - \frac{ \sqrt{2} b_{11}}{2} x y + \frac{\sqrt{2}b_{20}}{2} x^2 -\frac{ \sqrt{2} b_{20}}{2} y^2+ \frac{b_{11} b_{20}}{4} x^3 -
  x b_{11} b_{20} y^2 +  \frac{b_{20}^2}{2} x^2 y,
\\
\dot{y}= &   x + \frac{\sqrt{2} b_{11}}{4} x^2 + \sqrt{2} b_{20} x y -
  \sqrt{2} b_{11} y^2 + \frac{b_{11} b_{20}}{4} x^2 y + \frac{b_{20}^2}{2} x y^2 -
  b_{11} b_{20} y^3,
\end{aligned}
\end{equation}
where we still write $x, y$ instead of $u, v$.
It is easy to show that system \eqref{sys3-2} is Darboux  linearizable.
Therefore, the system has isochronous centers at the origin and at the point  $B:(0, -2/b_{11})$ if $ b_{11} \ne 0$.

\medskip
Now consider case (4)  of Theorem \ref{Theorem 1}.
In  this case 
  system \eqref{system 1} has the form
 \begin{equation} \label{sys4-1}
\begin{aligned}
\dot{x}= &    -y+a_{20} x^2-2 b_{20} x y+a_{20} b_{20} x^3+r_{11} x^2 y:=P_4(x,y),
\\
\dot{y}= &   x+a_{20} x y+b_{20} x^2-b_{20} y^2+a_{20} b_{20} x^2 y+r_{11} x y^2:=Q_4(x,y).
\end{aligned}
\end{equation}
It is difficult to find   the coordinates of equilibria of system \eqref{sys4-1} explicitly.
However,  we can calculate
 \begin{equation} \label{4TD}
\begin{aligned}
T_4(x,y):=  &     \frac{\partial P_4}{\partial x} +  \frac{\partial Q_4}{\partial y},
\\
D_4(x,y):=  &     \frac{\partial P_4}{\partial x}   \frac{\partial Q_4}{\partial y}-\frac{\partial P_4}{\partial y}   \frac{\partial Q_4}{\partial x}
\end{aligned}
\end{equation}
to find only coordinates of centers. Computing a Groebner basis of  $\langle P_4, Q_4, T_4 \rangle$
we obtained
 \begin{equation} \label{GB4-1}
\begin{aligned}
\mathcal{G}_{4}  :=  &  \{ a_{20} x y+4 b_{20} x^2+4 x, ~a_{20} y^2+4 b_{20} x y+4 y, ~-3 a_{20}^3 x+16 a_{20} b_{20}^2 x+16 a_{20} r_{11} x,
\\
  &    ~a_{20} x^2-4 b_{20} x y-4 y,  ~-3 a_{20}^2 y+16 b_{20}^2 y+16 r_{11} y, ~b_{20} x^3+b_{20} x y^2+x^2+y^2,
\\
  &~64 b_{20}^3 x^2+16 b_{20} r_{11} x^2-3 a_{20}^2 x-12 a_{20} b_{20} y+64 b_{20}^2 x+16 r_{11} x \}.
\end{aligned}
\end{equation}
Letting the first and the second polynomials in $\mathcal{G}_{4}$ be zeros, we get  $ y = -4 (b_{20} x+1)/a_{20} $ when $a_{20}\ne 0$
or $  x = y = 0 $. Substituting $y = -4 (b_{20} x+1)/a_{20}$ into $\mathcal{G}_{4}$, we have
\begin{equation} \label{GB4-2}
\begin{aligned}
  &  \{ 4(b_{20} x+1) (3 a_{20}^2-16 b_{20}^2-16 r_{11})/a_{20}, (16+(a_{20}^2+16 b_{20}^2) x^2+32 b_{20} x)/a_{20},
\\
  &     ~ (16+(a_{20}^2+16 b_{20}^2) x^2+32 b_{20} x) (b_{20} x+1)/a_{20}^2, -a_{20} x (3 a_{20}^2-16 b_{20}^2-16 r_{11}),
\\
  &~ (64 b_{20}^3+16 b_{20} r_{11}) x^2+(-3 a_{20}^2+112 b_{20}^2+16 r_{11}) x+48 b_{20} \}.
\end{aligned}
\end{equation}
Using the first  polynomial in \eqref{GB4-2}, we obtain   $ b_{20} x+1=0$, $y = 0 $ or
$ y = -4 (b_{20} x+1)/a_{20},  3 a_{20}^2-16 b_{20}^2-16 r_{11}=0 $. Substituting them in \eqref{GB4-2},
we obtain
\begin{eqnarray*}
&\{ (b_{20} x+1) x^2, ~a_{20} x^2, ~4 x (b_{20} x+1),  -x (-64 b_{20}^3 x-16 b_{20} r_{11} x+3 a_{20}^2-64 b_{20}^2-16 r_{11}),
 \\
&    -a_{20} x (3 a_{20}^2-16 b_{20}^2-16 r_{11})  \}
\end{eqnarray*}
and
\begin{eqnarray*}
&\{ (a_{20}^2 x^2 +(4 b_{20}x+4)^2)/a_{20}, ~(b_{20} x+1) (a_{20}^2 x^2+16 b_{20}^2 x^2+32 b_{20} x+16)/a_{20}^2,
 \\
&  3 b_{20} (a_{20}^2 x^2+16 b_{20}^2 x^2+32 b_{20} x+16) \},
\end{eqnarray*}
respectively. From the first and the second polynomials in above two sets, we see that on the line $y = -4 (b_{20} x+1)/a_{20}$ no center type equilibria exist when $a_{20}\ne 0$.

When $a_{20}= 0$, the basis $\mathcal{G}_{4}$ becomes $\{ b_{20} x^2+x, b_{20} x y+y, b_{20}^2 y+r_{11} y \}$, and we obtain that
 $ b_{20} x+1=0, (b_{20}^2 +r_{11}) y=0 $ or $ x = 0, y = 0$.
 When $a_{20}= 0$, $b_{20} x+1=0$ and $b_{20}^2 +r_{11}=0$, the line $b_{20} x+1=0$ is full of equilibria, none of which
 can be an isochronous center of system \eqref{sys4-1}. Hence, we only get the unique possible center  $A: (-1/b_{20}, 0)$
if $a_{20}= 0$, at which the trace of the linear matrix for system \eqref{sys4-1} is zero and the determinant is
$r_{11}/b_{20}^2+1$.
If $a_{20}= 0$, after moving the origin to the point $(-1/(2b_{20}), 0)$, system \eqref{sys4-1} is changed into
\begin{eqnarray}
\label{sys4-2}
\begin{aligned}
\dot{x}&= \frac{r_{11}}{4 b_{20}^2}y -\frac{2b_{20}^2+r_{11}}{b_{20}}xy +r_{11} x^2y,
\\
\dot{y} &= - \frac{1}{4b_{20}} +b_{20}x^2-\frac{ 2b_{20}^2+r_{11}}{2b_{20}} y^2 +r_{11} xy^2,
\end{aligned}
\end{eqnarray}
which is symmetric with respect to the $x$-axis. 
Moreover, equilibria $(\pm 1/(2b_{20}), 0)$ of system \eqref{sys4-2}
correspond to equilibria $A$ and $O$ of system \eqref{sys4-1}  respectively.
Thus,  except of the origin $O:(0,0)$ we get another isochronous center at the equilibrium  $A: (-1/b_{20}, 0)$  when $a_{20}= 0$, $r_{11}/b_{20}^2+1>0$ and
$ b_{20}\ne 0$.
Therefore, in case (4) system \eqref{system 1} has at most two isochronous centers.

\medskip


At last, we study the case when condition  \eqref{cond1*} is fulfilled.
In this situation let the vector field of system \eqref{system 1} be $(P_1(x,y), Q_1(x,y))$, as shown in \eqref{sys1-1}.
Similarly to  case (4), we only consider equilibria of center type avoiding complicated calculations of coordinates of all equilibria.
We calculate
 \begin{equation} \label{4TD}
\begin{aligned}
T_1(x,y):=  &     \frac{\partial P_1}{\partial x} +  \frac{\partial Q_1}{\partial y},
\\
D_1(x,y):=  &     \frac{\partial P_1}{\partial x}   \frac{\partial Q_1}{\partial y}-\frac{\partial P_1}{\partial y}   \frac{\partial Q_1}{\partial x}
\end{aligned}
\end{equation}
to find  coordinates of centers. 
The  Groebner basis of   $\langle P_1, Q_1, T_1 \rangle$ is
 \begin{equation} \label{GB1-1}
\begin{aligned}
\mathcal{G}_{1}  :=  &  \{ a_{20} y+b_{20} x+1, r_{11} x y+r_{20} x^2-r_{20} y^2+a_{20} x-b_{20} y, a_{20} r_{11} y^2+a_{20} r_{20} x y
\\
  &+b_{20} r_{20} y^2+a_{20}^2 y+b_{20}^2 y+r_{11} y+r_{20} x+a_{20} \}.
\end{aligned}
\end{equation}
If  $a_{20} =b_{20}=0$, system \eqref{sys1-1} cannot have other centers except of the origin. Without loss of generality
we suppose $b_{20} \ne 0$. If $a_{20} \ne 0$ the discussion is similar and we only need to make the change $(x,y) \to (y,x)$ with the time rescaling $dt=-d\tau$.
From the first  polynomial  in $\mathcal{G}_1$, we  get  $ x = -(a_{20} y+1)/b_{20} $.  Substituting it into $\mathcal{G}_1$, we have
\begin{equation} \label{GBg1}
g_1:= a_0+a_1 y+a_2 y^2=0,
\end{equation}
where $ a_0=a_{20} b_{20}-r_{20}$, $a_1=a_{20}^2 b_{20}+b_{20}^3-2 a_{20} r_{20}+b_{20} r_{11}$
and $a_2=-a_{20}^2 r_{20}+a_{20} b_{20} r_{11}+b_{20}^2 r_{20}$.
Thus, from \eqref{GBg1} we  find two roots $y_{\pm}=  (-a_1\pm \sqrt{a_1^2-4a_2a_0})/(2a_2)$ and then get two equilibria
$C_{\pm}: (-(a_{20} y_{\pm}+1)/b_{20},  y_{\pm})$ when $d_0:=a_1^2-4a_2a_0>0$ and $a_2\ne0$.
At $C_{\pm}$ the trace of linear matrix for system \eqref{sys1-1} is zero and the determinant of that is
\begin{eqnarray*}
\tilde{D}_{\pm}:= \frac{ d_0(\mp(a_{20}^2+b_{20}^2) \sqrt{d_0} -b_{20} (d_0/b_{20}^2-b_{20}^2 r_{11}+4 a_{20} b_{20} r_{20}+a_{20}^2 r_{11}-r_{11}^2-4 r_{20}^2))}
{2b_{20}^3(a_{20}^2 r_{20}-a_{20} b_{20} r_{11}-b_{20}^2 r_{20})^2}.
 \end{eqnarray*}
 Moreover,
 \[
\tilde{D}_+ \tilde{D}_-= -\frac{ d_0^2}{ b_{20}^4(a_{20}^2 r_{20}-a_{20} b_{20} r_{11}-b_{20}^2 r_{20})^2}<0,
 \]
 implying that at most one of $C_+$ and $C_-$ is a center.
 Actually, when  $r_{20} = a_{20} b_{20}$, we  find that the equilibrium $B:(0, -2/b_{11})$  is an isochronous center,
since it is easy to show that  system \eqref{sys1-1} is Darboux  linearizable at this point. 
Therefore, if  \eqref{cond1*} holds, then system \eqref{system 1} has at most two isochronous centers.
\end{proof}

To conclude, we have found conditions for isochronicity and linearizability of system \eqref{system 1} and clarified conditions
of isochronicity obtained in Chavarriga {\it et al} \cite{C-G-G}.
An important feature of our approach is the treatment of coefficients  of system \eqref{system 1} as
complex parameters, since this has allowed us to use formula \eqref{geincd2-IH} for finding
the decomposition of the integrability variety and to use the Radical Membership Test in order to
check the correctness computations involved modular arithmetics.

\section*{Acknowledgements}

The first author is partially supported by a CAPES grant.
The second author  is supported by the Slovenian Researcher Agency.
The fourth author is  supported by Marie Sklodowska-Curie Actions grant
655212-UBPDS-H2020-MSCA-IF-2014,  and partially supported by the National Natural
Science Foundations of China (No. 11431008).
The first and second authors   are also   supported by Marie Curie International Research Staff Exchange Scheme Fellowship
within the 7th European Community Framework Programme,
FP7-PEOPLE-2012-IRSES-316338.


\end{document}